\def\omathop#1#2#3{\let\temp=#1\def\letter{#2}
  \ifcat#3_ \let\next\@@olim\else\let\next\@olim\fi\next#3}
\def\@olim{\letter\text{-}\!\temp}
\def\@@olim_#1{\mathchoice{
   \setbox0=\hbox{$\displaystyle\letter\text{-}\!\temp\!\text{-}\letter$}
   \setbox2=\hbox{$\displaystyle\temp$}
   \setbox4=\hbox{$\scriptstyle#1$}
   \dimen@=\wd4 \advance\dimen@ by -\wd2 \divide\dimen@ by2
   \def\next{\letter\text{-}\!\temp_{\hbox to 0pt{\hss$\scriptstyle#1$\hss}}
     \hskip\dimen@}
   \ifdim\wd2>\wd4 \def\next{\@olim_{#1}}\fi
   \ifdim\wd4>\wd0 \def\next{\mathop{\llap{$\letter$-}\!\temp}\limits_{#1}}\fi
   \next}
   {\@olim_{#1}}{\@olim_{#1}}{\@olim_{#1}}}
\def\olim{\omathop{\lim}{o}}
\theoremstyle{plain}
\newtheorem{thm}{Theorem}[section]
\newtheorem{cor}[thm]{Corollary}
\newtheorem{lemma}[thm]{Lemma}
\newtheorem{rem}[thm]{Remark}
\newtheorem{example}[thm]{Example}
\theoremstyle{definition}
\newtheorem{definition}[thm]{Definition}
\numberwithin{equation}{section}
\begin{document}

\title{Decomposition of an abstract    Uryson operator}

\author{M.~A.~Ben Amor }

\address{Research Laboratory of Algebra, Topology, Arithmetic, and Order\\ Department of Mathematics \\ Faculty of Mathematical, Physical and Natural Sciences of Tunis \\ Tunis-El Manar University, 2092-El Manar, Tunisia}


\title{Decomposition of an abstract    Uryson operator}

\author{M.~Pliev }

\address{South Mathematical Institute of the Russian Academy of Sciences\\
str. Markusa 22,
Vladikavkaz, 362027 Russia}



\keywords{Orthogonally additive  order bounded operators, positive abstract Uryson operators, vector lattices, fragments,  band projections}

\subjclass[2010]{Primary 47H07; Secondary 47H99.}

\begin{abstract}
We consider  the space of abstract  Uryson operators firstly introduced in \cite{Maz-1}. We obtain the formulas for   band projections on the band generated by increasing set of a positive Uryson operators and  on the band generated one-dimensional  abstract Uryson operators.  We also calculate the laterally continuous part of a abstract  Uryson operator.
\end{abstract}

\maketitle


\section{Introduction}

Nowadays the theory of regular operators in vector lattices is a great area of Functional Analysis {\cite{Ab,Al,Ku,Za}}. Nonlinear maps between vector lattices constitue  a more delicate  subject. The interesting class of nonlinear maps which called abstract Uryson  operators   was introduced and studied in 1990 by Maz\'{o}n and Segura de Le\'{o}n \cite{Maz-1,Maz-2}, and then considered to be defined on lattice-normed spaces by Kusraev and second named author \cite{Ku-1,Ku-2,Pl-3}. The space of all abstract Uryson operators has a nice order properties, but the structure of this space is still less known.
In this  notes we investigate  some bands in the space of all abstract Uryson operators and find formulas for band projections on this bands.

\section{Preliminary information}

The  goal of this section is to introduce some basic definitions and facts. General information on vector lattices  the reader can find in the books \cite{Al,Ku,Za}.

\begin{definition} \label{def:ddmjf0}
Let $E$ be a vector lattice, and let $F$ be a real linear space. An operator $T:E\rightarrow F$ is called \textit{orthogonally additive} if $T(x+y)=T(x)+T(y)$ whenever $x,y\in E$ are disjoint.
\end{definition}

It follows from the definition that $T(0)=0$. It is immediate that the set of all orthogonally additive operators is a real vector space with respect to the natural linear operations.

\begin{definition}
Let $E$ and $F$ be vector lattices. An orthogonally additive operator $T:E\rightarrow F$ is called:
\begin{itemize}
  \item \textit{positive} if $Tx \geq 0$ holds in $F$ for all $x \in E$;
  \item \textit{order bounded} it $T$ maps order bounded sets in $E$ to order bounded sets in $F$.
\end{itemize}
An orthogonally additive order bounded operator $T:E\rightarrow F$ is called an \textit{abstract Uryson} operator.
\end{definition}

The set of all abstract Uryson operators from $E$ to $F$ we denote by $\mathcal{U}(E,F)$. We will consider some examples.
The most famous one is the nonlinear integral Uryson operator.

\begin{example}\label{Ex-0}
Let $(A,\Sigma,\mu)$ and $(B,\Xi,\nu)$ be $\sigma$-finite complete measure spaces, and let $(A\times B,\mu\times\nu)$ denote the completion of their product measure space. Let $K:A\times B\times\Bbb{R}\rightarrow\Bbb{R}$ be a function satisfying the following conditions\footnote{$(C_{1})$ and $(C_{2})$ are called the Carath\'{e}odory conditions}:
\begin{enumerate}
  \item[$(C_{0})$] $K(s,t,0)=0$ for $\mu\times\nu$-almost all $(s,t)\in A\times B$;
  \item[$(C_{1})$] $K(\cdot,\cdot,r)$ is $\mu\times\nu$-measurable for all $r\in\Bbb{R}$;
  \item[$(C_{2})$] $K(s,t,\cdot)$ is continuous on $\Bbb{R}$ for $\mu\times\nu$-almost all $(s,t)\in A\times B$.
\end{enumerate}
Given $f\in L_{0}(A,\Sigma,\mu)$, the function $|K(s,\cdot,f(\cdot))|$ is $\mu$-measurable  for $\nu$-almost all $s\in B$ and $h_{f}(s):=\int_{A}|K(s,t,f(t))|\,d\mu(t)$ is a well defined and $\nu$-measurable function. Since the function $h_{f}$ can be infinite on a set of positive measure, we define
$$
\text{Dom}_{A}(K):=\{f\in L_{0}(\mu):\,h_{f}\in L_{0}(\nu)\}.
$$
Then we define an operator $T:\text{Dom}_{A}(K)\rightarrow L_{0}(\nu)$ by setting
$$
(Tf)(s):=\int_{A}K(s,t,f(t))\,d\mu(t)\,\,\,\,\nu-\text{a.e.}\,\,\,\,(\star)
$$
Let $E$ and $F$ be order ideals in $L_{0}(\mu)$ and $L_{0}(\nu)$ respectively, $K$ a function satisfying $(C_{0})$-$(C_{2})$. Then $(\star)$ defines an \textit{orthogonally additive order bounded integral operator} acting from $E$ to $F$ if $E\subseteq \text{Dom}_{A}(K)$ and $T(E)\subseteq F$.
\end{example}

\begin{example} \label{Ex-1}
We consider the vector space $\mathbb R^m$, $m \in \mathbb N$ as a vector lattice with the coordinate-wise order: for any $x,y \in \mathbb R^m$ we set $x \leq y$ provided $e_i^*(x) \leq e_i^*(y)$ for all $i = 1, \ldots, m$, where $(e_i^*)_{i=1}^m$ is the coordinate functionals on $\mathbb R^m$. Let $T:\Bbb{R}^{n}\rightarrow\Bbb{R}^{m}$. Then $T\in\mathcal{U}(\Bbb{R}^{n},\Bbb{R}^{m})$ if and only if there are real functions $T_{i,j}:\Bbb{R}\rightarrow\Bbb{R}$,
$1\leq i\leq m$, $1\leq j\leq n$ satisfying $T_{i,j}(0)=0$ such that
$$
e_i^*\bigl(T(x_{1},\dots,x_{n})\bigr) = \sum_{j=1}^{n}T_{i,j}(x_{j}),
$$
In this case we write $T=(T_{i,j})$.
\end{example}

Let $E$ be a vector lattice and $x\in E$. Recall that an element $z\in E$ is called a {\it component} or a \textit{fragment} of $x$ if  $z\bot(x-z)$. The set of all  fragments of an element $x$ is denoted by $\mathcal{F}_{x}$. The notations $z\sqsubseteq x$ means that $z$ is a fragment of $x$.
Consider the following order in $\mathcal{U}(E,F):S\leq T$ whenever $T-S$ is a positive operator. Then $\mathcal{U}(E,F)$
becomes an ordered vector space.  If vector lattice $F$ is Dedekind complete we have the following theorem.
\begin{thm}(\cite{Maz-1},Theorem~3.2)\label{th-1}.
Let $E$ and $F$ be a vector lattices, $F$ Dedekind complete. Then $\mathcal{U}(E,F)$ is a Dedekind complete vector lattice. Moreover for $S,T\in \mathcal{U}(E,F)$ and for $f\in E$ following hold
\begin{enumerate}
\item~$(T\vee S)(f):=\sup\{Tg+Sh:\,f=g+h;\,g\bot h\}$.
\item~$(T\wedge S)(f):=\inf\{Tg+Sh:\,f=g+h;\,g\bot h\}.$
\item~$(T)^{+}(f):=\sup\{Tg:\,g\sqsubseteq f\}$.
\item~$(T)^{-}(f):=-\inf\{Tg:\,g;\,\,g\sqsubseteq f\}$.
\item~$|Tf|\leq|T|(f)$.
\end{enumerate}
\end{thm}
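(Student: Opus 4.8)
The plan is to establish first the Riesz--Kantorovich-type formula for the positive part, statement~(3), and then to deduce everything else from it. I would begin with the decisive structural observation on fragments: if $f=f_{1}+f_{2}$ with $f_{1}\bot f_{2}$, then a vector $g$ is a fragment of $f$ if and only if $g=g_{1}+g_{2}$ with $g_{i}\sqsubseteq f_{i}$ and $g_{1}\bot g_{2}$. For $f\geq 0$ one takes $g_{i}:=g\wedge f_{i}$ and checks $g_{1}+g_{2}=g\wedge f=g$ using $f_{1}\wedge f_{2}=0$ and $f_{1}\vee f_{2}=f$; the general case reduces to this. Granting this, fix $T\in\mathcal{U}(E,F)$ and $f\in E$ and consider $\{Tg:\,g\sqsubseteq f\}$. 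Every fragment of $f$ lies in the order interval $[-|f|,|f|]$, so order boundedness of $T$ makes this set order bounded in $F$; since $F$ is Dedekind complete, $R(f):=\sup\{Tg:\,g\sqsubseteq f\}$ is well defined.

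Next I would verify that $R\in\mathcal{U}(E,F)$ and that $R=T\vee 0$. Orthogonal additivity of $R$ is where the fragment-decomposition lemma does its work: for $f_{1}\bot f_{2}$ it lets me write $\{Tg:\,g\sqsubseteq f_{1}+f_{2}\}=\{Tg_{1}+Tg_{2}:\,g_{i}\sqsubseteq f_{i}\}$, using $T(g_{1}+g_{2})=Tg_{1}+Tg_{2}$ by disjointness, and then $\sup(Tg_{1}+Tg_{2})=\sup Tg_{1}+\sup Tg_{2}$ yields $R(f_{1}+f_{2})=R(f_{1})+R(f_{2})$; order boundedness of $R$ is inherited from that of $T$. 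Taking the fragments $g=0$ and $g=f$ shows $R\geq 0$ and $R\geq T$. For minimality, let $S\in\mathcal{U}(E,F)$ satisfy $S\geq T$ and $S\geq 0$; for any $g\sqsubseteq f$ the complementary fragment $f-g$ is disjoint from $g$, whence $S(f)=S(g)+S(f-g)\geq Tg+0=Tg$, and taking the supremum over $g$ gives $S\geq R$. Thus $T^{+}:=T\vee 0$ exists and is given by~(3).

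From the existence of $T\vee 0$ for every $T$ it follows, by the usual ordered-vector-space argument, that $\mathcal{U}(E,F)$ is a vector lattice. Formula~(4) is then $T^{-}=(-T)^{+}$ rewritten via $-\inf=\sup(-\,\cdot\,)$; formula~(1) follows from $T\vee S=(T-S)^{+}+S$ after substituting $h:=f-g$ and using $Sh=Sf-Sg$; formula~(2) is dual; and~(5) drops out of $|T|=T^{+}+T^{-}$ together with $0\sqsubseteq f$ and $f\sqsubseteq f$, which sandwich $Tf$ between $\inf\{Tg:\,g\sqsubseteq f\}$ and $\sup\{Tg:\,g\sqsubseteq f\}$. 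For Dedekind completeness I would take a subset of $\mathcal{U}(E,F)$ bounded above, pass to its upward directed hull $\mathcal{D}=\{T_{\alpha}\}$ bounded by $S_{0}$, and set $T(f):=\sup_{\alpha}T_{\alpha}(f)$, each set $\{T_{\alpha}(f)\}$ being order bounded in $F$ since $T_{\alpha}\leq S_{0}$; directedness makes $\sup_{\alpha}$ commute with the addition $T_{\alpha}(f_{1})+T_{\alpha}(f_{2})$, so $T$ is orthogonally additive, order bounded, and the least upper bound of $\mathcal{D}$.

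The step I expect to be the genuine obstacle is the fragment-decomposition lemma together with the attendant interchange of suprema: proving $\mathcal{F}_{f_{1}+f_{2}}=\mathcal{F}_{f_{1}}+\mathcal{F}_{f_{2}}$ for non-positive $f$, where one cannot simply intersect with the $f_{i}$, and justifying $\sup(a_{\alpha}+b_{\alpha})=\sup a_{\alpha}+\sup b_{\alpha}$ only after reducing to directed families. Everything past those two points is bookkeeping with Riesz-space identities.
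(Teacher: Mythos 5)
The paper does not prove this theorem; it is quoted from Maz\'on and Segura de Le\'on (\cite{Maz-1}, Theorem~3.2), so there is no in-paper proof to compare against. Your proposal is correct and follows essentially the argument of that source: establish the Riesz--Kantorovich-type formula for $T^{+}$ via the disjoint-decomposition lemma for fragments, then obtain the lattice operations, the remaining formulas, and Dedekind completeness by standard ordered-vector-space bookkeeping. The two points you flag --- the identity $\mathcal{F}_{f_{1}+f_{2}}=\mathcal{F}_{f_{1}}+\mathcal{F}_{f_{2}}$ for disjoint $f_{1},f_{2}$ (reduced to the positive case via $g\sqsubseteq f\Leftrightarrow g^{+}\sqsubseteq f^{+}$ and $g^{-}\sqsubseteq f^{-}$) and the interchange of suprema over independent, hence automatically directed, families --- are indeed the only places requiring care, and both go through.
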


\section{Bands, generated by a subsets of an operators}

Order projections are an important tool in the vector lattice theory. There are a some interesting results concerning order projections in the spaces of linear, bilinear  and orthogonally additive operators in vector lattices \cite{Al-1,Ko,Ko-1,Pag,Pl-3, Pl-4,Pl-5}. In this section we find projection formulas for order projections  on a band, generated by increasing set of positive abstract Uryson operators and band, generated by operators of the finite rank.

\begin{lemma}\label{disjoint}
Let $E,F$ be vector lattices, with $F$ Dedekind complete. Two operators $T,S\in\mathcal{U}_{+}(E,F)$  are disjoint if and only if for arbitrary $e\in E$ and $\varepsilon>0$
there exist a partition of unity $(\rho_{\alpha})$  and a family $(e_{\alpha})\subset\mathcal{F}_{e}$, such that
$$
\rho_{\alpha}Te_{\alpha}\leq\varepsilon Te;\,
\rho_{\alpha}S(e-e_{\alpha})\leq\varepsilon Se;\,
$$
\end{lemma}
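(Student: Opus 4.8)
The plan is to reduce the disjointness of $T$ and $S$ to the vanishing of their infimum and then read off the stated estimates from the formula in Theorem~\ref{th-1}(2). Since $T,S\in\mathcal{U}_{+}(E,F)$ are positive, $T\perp S$ is equivalent to $T\wedge S=0$, i.e.\ to $(T\wedge S)(e)=0$ for every $e\in E$. By Theorem~\ref{th-1}(2), rewriting each disjoint decomposition $e=g+h$ with $g\perp h$ as $g=e'$, $h=e-e'$ with $e'\sqsubseteq e$, one has $(T\wedge S)(e)=\inf\{u_{e'}:e'\in\mathcal{F}_{e}\}$, where $u_{e'}:=Te'+S(e-e')\geq0$. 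Throughout I use that band projections on the Dedekind complete lattice $F$ are positive, order continuous lattice homomorphisms, so they commute with arbitrary infima and suprema.

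Sufficiency is the short half. Assume the partition condition and fix $e\in E$. For every index $\alpha$ the inequality $(T\wedge S)(e)\leq u_{e_{\alpha}}=Te_{\alpha}+S(e-e_{\alpha})$ holds; applying the positive projection $\rho_{\alpha}$ and the two hypotheses gives $\rho_{\alpha}(T\wedge S)(e)\leq\rho_{\alpha}Te_{\alpha}+\rho_{\alpha}S(e-e_{\alpha})\leq\varepsilon(Te+Se)$. Because $(\rho_{\alpha})$ is a partition of unity and $(T\wedge S)(e)\geq0$, taking the supremum over $\alpha$ yields $(T\wedge S)(e)=\sup_{\alpha}\rho_{\alpha}(T\wedge S)(e)\leq\varepsilon(Te+Se)$. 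As $\varepsilon>0$ is arbitrary and $F$ is Archimedean, $(T\wedge S)(e)\leq0$; combined with positivity this forces $(T\wedge S)(e)=0$, whence $T\perp S$.

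For necessity assume $T\perp S$, so $\inf_{e'\in\mathcal{F}_{e}}u_{e'}=0$, and fix $e$ and $\varepsilon>0$. The heart of the argument is the localization claim: for every nonzero band projection $\pi_{0}$ on $F$ there are a fragment $e'\in\mathcal{F}_{e}$ and a nonzero band projection $\pi\leq\pi_{0}$ with $\pi Te'\leq\varepsilon Te$ and $\pi S(e-e')\leq\varepsilon Se$. To prove it, set $q:=Te\wedge Se\geq0$ and distinguish two cases. If $\pi_{0}q\neq0$, then not every $u_{e'}$ can satisfy $\pi_{0}u_{e'}\geq\varepsilon\pi_{0}q$ (otherwise $\inf_{e'}\pi_{0}u_{e'}=\pi_{0}\inf_{e'}u_{e'}=0$ would dominate $\varepsilon\pi_{0}q>0$); choosing $e'$ with $\pi_{0}u_{e'}\not\geq\varepsilon\pi_{0}q$ and letting $\pi$ be the band projection onto the band generated by $(\varepsilon\pi_{0}q-\pi_{0}u_{e'})^{+}$ produces a nonzero $\pi\leq\pi_{0}$ with $\pi u_{e'}\leq\varepsilon\pi q\leq\varepsilon q$, and hence, using $Te'\leq u_{e'}$, $S(e-e')\leq u_{e'}$ and $q\leq Te,\,q\leq Se$, both required estimates. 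If $\pi_{0}q=0$, then $\pi_{0}Te\perp\pi_{0}Se$; when $\pi_{0}Se\neq0$ the projection onto the band of $\pi_{0}Se$ annihilates $\pi_{0}Te$, so the extreme fragment $e'=e$ works on this nonzero piece, while if $\pi_{0}Se=0$ the fragment $e'=0$ works on the whole of $\pi_{0}$.

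It then remains to assemble a partition of unity. Call a band projection \emph{admissible} if it satisfies the two estimates for some fragment of $e$; admissibility passes to smaller projections with the same fragment. By Zorn's lemma choose a maximal pairwise disjoint family $(\rho_{\alpha})$ of nonzero admissible projections, with witnessing fragments $(e_{\alpha})$. If $\sup_{\alpha}\rho_{\alpha}=:\rho\neq I$, applying the localization claim to $\pi_{0}=I-\rho$ yields a nonzero admissible projection disjoint from every $\rho_{\alpha}$, contradicting maximality; hence $(\rho_{\alpha})$ is a partition of unity with the desired fragments. I expect the localization claim to be the main obstacle: the naive attempt to project onto the band generated by the bad part $(Te'-\varepsilon Te)^{+}\vee(S(e-e')-\varepsilon Se)^{+}$ fails because $\inf_{e'}u_{e'}=0$ controls these elements only in the order sense and not band by band, and the degenerate references where $Te\wedge Se$ vanishes must be absorbed separately by the extreme fragments $0$ and $e$.
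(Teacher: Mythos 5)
Your proof is correct, and both halves line up with what the lemma needs: the ``if'' direction is the same easy computation the paper dismisses as obvious, and the ``only if'' direction starts from the same reduction, namely that $T\wedge S=0$ means $\inf\{Te'+S(e-e'):e'\in\mathcal{F}_{e}\}=0$ by Theorem~\ref{th-1}(2). Where you genuinely diverge is in how the partition of unity is extracted from this vanishing infimum. The paper builds a single reference element $f=Se\wedge Te+\rho_{Se\wedge Te}^{\bot}(Se+Te)$, chosen so that $Se+Te$ lies in the band generated by $f$ while $\rho_{Te}f\leq Te$ and $\rho_{Se}f\leq Se$, and then invokes in one line the standard characterization of a zero order-infimum in a Dedekind complete lattice: there is a partition of unity $(\rho_{\alpha})$ and fragments $(e_{\alpha})$ with $\rho_{\alpha}(Te_{\alpha}+S(e-e_{\alpha}))\leq\varepsilon f$, after which the two stated estimates drop out. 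You instead prove that extraction from scratch: a localization claim (split on whether $\pi_{0}(Te\wedge Se)$ vanishes, using the projection onto the band of $(\varepsilon\pi_{0}q-\pi_{0}u_{e'})^{+}$ in the nondegenerate case and the extreme fragments $0$ and $e$ in the degenerate one) followed by a Zorn exhaustion to assemble a maximal disjoint family of admissible projections. Your route is longer but fully self-contained, and your case analysis makes explicit exactly the degeneracy (the part of $F$ where $Te\wedge Se=0$) that the paper's choice of $f$ is silently engineered to absorb; the paper's route buys brevity at the cost of leaning on an uncited standard fact and leaving the verification of the properties of $f$ to the reader.
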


\begin{proof}
Let $S\wedge T=0$. Fix arbitrary $e\in E$ and consider the element
$f=Se\wedge Te+\rho_{Se\wedge Te}^{\bot}(Se+Te)$, where $\rho_{g}$ is a
projection on the band $\{g\}^{\bot\bot}$.  Then we have $Se+Te\in\{f\}^{\bot\bot}$ and
$\rho_{Se}f\leq Se$ and  $\rho_{Te}f\leq Te$. Then, using the disjointness of $T$ and $S$, we may write, $\rho_{\alpha}(Te_{\alpha}+S(e-e_{\alpha}))\leq\varepsilon f$  for a certain partition of unity $(\rho_{\alpha})$ and a certain set of a fragments of $e$.  Consequently,
$$
\rho_{\alpha}Te_{\alpha}\leq\rho_{Te}\varepsilon f\leq\varepsilon Te;\,
\rho_{\alpha}S(e-e_{\alpha})\leq\rho_{Se}\varepsilon f\leq\varepsilon Se.
$$
The converse statement is obvious.
\end{proof}

For each set $A\subset\mathcal{U}(E,F)$ we denote by $\pi_{A}$ the projection in the $\mathcal{U}(E,F)$ on the band
$\{A\}^{\bot\bot}$ and set $\sigma_{A}=(\pi_{A})^{\bot}$.  A the set $A\subset\mathcal{U}_{+}(E,F)$  is called { \it increasing} if for arbitrary
$S,T\in A$  there exists a $Q\in A$ such that $S,T\leq Q$.

\begin{thm}\label{proj-1}
Let $E,F$ be a vector lattices, $F$ Dedekind complete, $A\subset\mathcal{U}_{+}(E,F)$ be a increasing set.
Then the following equation holds for
an arbitraries $T\in\mathcal{U}_{+}(E,F)$ and  $e\in E$
$$
\sigma_{A}Te=\inf\limits_{\varepsilon>0,S\in A}\sup
\{\rho Tf:\,\rho Sf\leq\varepsilon Se;\,f\in\mathcal{F}_{e};\,\rho\in\mathfrak{B}(F)\}.
$$
\end{thm}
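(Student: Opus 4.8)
The plan is to fix $T\in\mathcal{U}_{+}(E,F)$ and $e\in E$, set $U:=\sigma_{A}T$ and $V:=\pi_{A}T$, and prove the displayed identity by sandwiching $Ue$ between the two sides. Recall that $U,V\geq0$, $U+V=T$, that $U\bot S$ for every $S\in A$, and that $V\in\{A\}^{\bot\bot}$. For $S\in A$ and $\varepsilon>0$ put
$$M(S,\varepsilon):=\sup\{\rho Tf:\ \rho Sf\leq\varepsilon Se,\ f\in\mathcal{F}_{e},\ \rho\in\mathfrak{B}(F)\}.$$
Since every $f\sqsubseteq e$ gives $Tf\leq Te$ (orthogonal additivity and $T\geq0$) and $0\leq\rho Tf\leq Tf$, this set is bounded above by $Te$, so by Dedekind completeness $M(S,\varepsilon)$ exists with $0\leq M(S,\varepsilon)\leq Te$; moreover $M(S,\cdot)$ is nondecreasing in $\varepsilon$. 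The right-hand side of the theorem is $\inf_{S,\varepsilon}M(S,\varepsilon)$.

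First I would prove $Ue\leq\inf_{S,\varepsilon}M(S,\varepsilon)$. Fix $S\in A$ and $\varepsilon\in(0,1)$. As $U\bot S$, Lemma~\ref{disjoint} applied to the pair $U,S$ gives a partition of unity $(\rho_{\alpha})\subset\mathfrak{B}(F)$ and fragments $(e_{\alpha})\subset\mathcal{F}_{e}$ with $\rho_{\alpha}Ue_{\alpha}\leq\varepsilon Ue$ and $\rho_{\alpha}S(e-e_{\alpha})\leq\varepsilon Se$. Writing $f_{\alpha}:=e-e_{\alpha}\in\mathcal{F}_{e}$, the pair $(\rho_{\alpha},f_{\alpha})$ is admissible for $M(S,\varepsilon)$, so $\rho_{\alpha}Tf_{\alpha}\leq M(S,\varepsilon)$. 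Orthogonal additivity of $U$ yields $Ue=Ue_{\alpha}+Uf_{\alpha}$, hence $\rho_{\alpha}Ue\leq\varepsilon Ue+\rho_{\alpha}Tf_{\alpha}\leq\varepsilon Ue+M(S,\varepsilon)$, using $Uf_{\alpha}\leq Tf_{\alpha}$. Taking the supremum over $\alpha$ and using $\sup_{\alpha}\rho_{\alpha}Ue=Ue$ gives $Ue\leq\varepsilon Ue+M(S,\varepsilon)\leq M(S,\varepsilon)+\varepsilon Te$. Fixing $S$ and letting $\varepsilon$ run through $(0,\varepsilon_{0}]$, monotonicity of $M(S,\cdot)$ and the Archimedean property give $Ue\leq M(S,\varepsilon_{0})$, whence $Ue\leq\inf_{S,\varepsilon}M(S,\varepsilon)$.

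Next I would prove $\inf_{S,\varepsilon}M(S,\varepsilon)\leq Ue$. For a pair $(\rho,f)$ admissible for $M(S,\varepsilon)$ split $\rho Tf=\rho Uf+\rho Vf$, where $\rho Uf\leq Uf\leq Ue$. To control $\rho Vf$ I use $V\in\{A\}^{\bot\bot}$: by the infimum formula of Theorem~\ref{th-1} one has $(V\wedge nS)f\leq nSf$, so $\rho(V\wedge nS)f\leq n\rho Sf\leq n\varepsilon Se$; since $V-(V\wedge nS)=(V-nS)^{+}$,
$$\rho Vf\leq n\varepsilon Se+(V-nS)^{+}f\leq n\varepsilon Se+(V-nS)^{+}e.$$
Hence $M(S,\varepsilon)\leq Ue+n\varepsilon Se+(V-nS)^{+}e$ for every $n\in\mathbb{N}$. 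Taking the infimum over $\varepsilon$ (killing the middle term) and then over $S\in A$ and $n$, the estimate collapses to the single key claim $\inf_{S\in A,\,n\in\mathbb{N}}(V-nS)^{+}e=0$, after which $\inf_{S,\varepsilon}M(S,\varepsilon)\leq Ue$ follows and the two inequalities finish the proof.

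The main obstacle is exactly this key claim, and it is where the hypothesis that $A$ is increasing enters. Since $A$ is upward directed, the ideal it generates equals $\{Q:\ |Q|\leq nS\text{ for some }n\in\mathbb{N},\ S\in A\}$, and the description of a positive band element as the supremum of its meets with the generating ideal gives $(V\wedge nS)\uparrow V$ in $\mathcal{U}(E,F)$. Using the identity $(V\wedge nS)(e)=Ve-(V-nS)^{+}(e)$, which follows from Theorem~\ref{th-1}(3) since $(V-nS)^{+}(e)=\sup\{(V-nS)g:g\sqsubseteq e\}$, the claim is equivalent to order continuity of the evaluation along the net $(V\wedge nS)$, i.e.\ $(V\wedge nS)(e)\uparrow Ve$. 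I would establish this by contradiction: were $z:=\inf_{S,n}(V-nS)^{+}e>0$, then for each $S$ and large $n$ one could choose fragments $h\sqsubseteq e$ with $Vh$ bounded away from $0$ while $Sh\leq n^{-1}Ve$; assembling such fragments over the directed family $A$ and invoking Lemma~\ref{disjoint} would yield a nonzero positive Uryson operator dominated by $V$ yet disjoint from every $S\in A$, contradicting $V\in\{A\}^{\bot\bot}$. Making this assembly rigorous — gluing the chosen fragments across a partition of unity into one well-defined operator — is the delicate technical heart of the argument.
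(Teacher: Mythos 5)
Your overall strategy coincides with the paper's: both directions are proved by sandwiching, the inequality $\sigma_{A}Te\leq\inf_{S,\varepsilon}M(S,\varepsilon)$ comes from Lemma~\ref{disjoint} applied to the disjoint pair $(\sigma_{A}T,S)$, and the reverse inequality comes from approximating $\pi_{A}T$ from below by operators dominated by $nS$, $S\in A$. Your first direction is complete and in fact somewhat cleaner than the paper's (the paper routes it through the auxiliary identity $\vartheta(T)=\vartheta(\sigma_{A}T)$ before invoking Lemma~\ref{disjoint}). Your second direction correctly reduces everything to the single claim $\inf_{S\in A,\,n\in\mathbb{N}}(V-nS)^{+}e=0$ with $V=\pi_{A}T$, and the algebra leading there ($\rho Vf\leq n\varepsilon Se+(V-nS)^{+}e$ via $V=(V\wedge nS)+(V-nS)^{+}$) is sound.

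The one genuine gap is that you do not prove this key claim: you sketch a proof by contradiction that would require gluing fragments across a partition of unity into a nonzero operator disjoint from all of $A$, and you concede you cannot make that assembly rigorous. You do not need it. Since $A$ is increasing, the family $\{V\wedge nS:\,S\in A,\ n\in\mathbb{N}\}$ is upward directed, and because $V$ is a positive element of the band generated by $A$ one has $\sup_{S,n}(V\wedge nS)=V$ in $\mathcal{U}(E,F)$ (the band generated by an ideal consists of suprema of directed subsets of that ideal, and the ideal generated by the increasing set $A$ is $\bigcup\{[-nS,nS]:S\in A,\ n\in\mathbb{N}\}$). Directed suprema of positive orthogonally additive operators are computed pointwise: the pointwise supremum of an upward directed bounded family is itself orthogonally additive (sums of directed suprema split across disjoint decompositions) and is therefore the supremum in $\mathcal{U}(E,F)$. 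Hence $(V\wedge nS)(e)\uparrow Ve$, i.e.\ $(V-nS)^{+}(e)\downarrow 0$, which is your claim. This is precisely the fact the paper invokes when it writes down the net $T_{\gamma}\uparrow\pi_{A}T$ with $T_{\gamma}\leq nS$ and uses $\pi_{A}Te=\sup_{\gamma}T_{\gamma}e$; replacing your sketched contradiction argument by this two-line citation closes the proof.
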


\begin{proof}
Denote the right-hand side of the above equation  by $\vartheta(T)(e)$ and let $\kappa(T)=T-\vartheta(T)$. It is clear that map $\vartheta(T):E\to F$ is orthogonally additive, order bounded  operator and $0\leq\vartheta(T)\leq T$. It is enough to prove that $\kappa(T)=\pi_{A}T$. Now we may write
$$
\kappa(T)(e)=\sup\limits_{\varepsilon>0,S\in A}\inf\{\rho Tf+\rho^{\bot}Te:\,\rho Sf\leq\varepsilon Se;\,f\in\mathcal{F}_{e};\,\rho\in\mathfrak{B}(F)\}.
$$
By the condition, $\pi_{A}T\in\{A\}^{\bot\bot}$, there  exists  a net
of  operators $(T_{\gamma})_{\gamma\in\Gamma}\subset\mathcal{U}_{+}(E,F)$, so that  $T_{\gamma}\uparrow\pi_{A}T$ and
$$
T_{\gamma}\leq\sum\limits_{i=1}^{n(\gamma)}\lambda_{i}|S_{i}|:\, S_{i}\in A,\,n(\gamma)\in\Bbb{N};\,\gamma\in\Gamma.
$$
Using the fact, that $A$ is increasing set we have
$$
(T_{\gamma})\subset\bigcup\{[0,nS]:\, S\in A,\,n\in\Bbb{N}\}.
$$
Fix $\gamma_{0}\in\Gamma$, $T_{\gamma_{0}}\in(T_{\gamma})_{\gamma\in\Gamma}$. Then $T_{\gamma_{0}}\leq nS$ for certain $S\in A$ and $n\in\Bbb{N}$.
For an arbitrary  $\varepsilon>0$ we choose elements  $\rho\in\mathfrak{B}(F)$  and  $f\in\mathcal{F}_{e}$
such that $\rho S(e-f)\leq\varepsilon Se$. Hence
$$
T_{\gamma_{0}}e\leq\rho T_{\gamma_{0}}(e-f)+\rho Tf+\rho^{\bot}Te\leq
$$
$$
\leq\rho nS(e-f)+\rho T(e-f)+\rho Tf\leq
$$
$$
\leq\varepsilon nSe+\rho T(e-f)+\rho Tf.
$$
Then we have
$$
T_{\gamma_{0}}e\leq\varepsilon nSe+
\inf\{\rho Tf+\rho^{\bot}Te;\,f\in\mathcal{F}_{e};\,\rho Sf\leq\varepsilon Se;\,\rho\in\mathfrak{B}(F)\}\leq
$$
$$
\leq\varepsilon nSe+\kappa(T)(e).
$$
Since  the  $\varepsilon>0$ is  arbitrary,  it  follows  that
$T_{\gamma_{0}}e\leq\kappa(T)(e)$. Therefore $\sup_{\gamma\in\Gamma}T_{\gamma}e\leq\kappa(T)(e)$  and
$$
\pi_{A}Te=\sup\limits_{\gamma}T_{\gamma}e\leq\kappa(T)(e)\Rightarrow\vartheta(T)(e)\leq\sigma_{A}Te.
$$
Since $e\in E$ is arbitrary, it follows that $\vartheta(T)\leq\sigma_{A}T$.
Let us   prove  the  reverse  assertion.  It  is  not difficult to check   that
$$
\vartheta(\sigma_{A}T)\leq\vartheta(T)\leq\vartheta(\sigma_{A}T)+\vartheta(\pi_{A}T)
$$
for each $T\in\mathcal{U}_{+}(E,F)$. On  the  other  hand,  we have that
$$
\vartheta(\pi_{A}T)\leq\sigma_{A}\pi_{A}T=0.
$$
Therefore $\vartheta(T)=\vartheta(\sigma_{A}T)$. Now,  it  remains  to  prove  that  $\vartheta(\sigma_{A}T)=\sigma_{A}Tx$.
Let $C=\sigma_{A}T$ and $S\in A$. Then $C\bot S$ and by \ref{disjoint}, we can choose a full family  $(\rho_{\xi})_{\xi\in\Xi}$ of mutually disjoint order projections on $F$ and  family  $(e_{\xi})_{\xi\in\Xi}$ of a fragments of $e$ so that
$$
\varepsilon Ce\leq\rho_{\xi}Ce_{\xi}=\rho_{\xi}(\rho_{\xi}Ce_{\xi}+\rho_{\xi}^{\bot}Ce)\geq
$$
$$
\geq\rho_{\xi}\inf\{\rho Cf+\rho^{\bot}Ce:\,\rho\in\mathfrak{B}(F),\,f\in\mathcal{F}_{e},\,\rho Sf
\leq\varepsilon Se\}.
$$
Hence $\varepsilon Ce\geq\sup_{\xi}\rho_{\xi}\kappa(C)$ for an arbitrary $\varepsilon>0$.
It means that $Ce-\vartheta(C)=\kappa(C)=0$.
\end{proof}

\begin{cor}\label{cor:2}
For  arbitrary  $S,T\in\mathcal{U}_{+}(E,F)$ and $e\in E$
$$
\pi_{S}Tx=\sup\limits_{\varepsilon>0}\inf\{\rho Tf+\rho^{\bot}Te:\,\rho S(e-f)\leq\varepsilon Se\};
$$
$$
\sigma_{S}Te=\inf\limits_{\varepsilon>0}\sup\{\rho Tf:\,\rho Sf\leq\varepsilon Se\}.
$$
where $\rho\in\mathfrak{B}(F)$ and $f\in\mathcal{F}_{e}$.
\end{cor}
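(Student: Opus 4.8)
The plan is to obtain both identities as specializations of Theorem~\ref{proj-1} applied to the singleton $A=\{S\}$. First I would note that a one-point set $A=\{S\}$ is trivially increasing (take $Q=S$ in the definition) and that $\{A\}^{\bot\bot}=\{S\}^{\bot\bot}$, so that $\pi_A=\pi_S$ and $\sigma_A=\sigma_S$. Since the infimum over $S\in A$ in Theorem~\ref{proj-1} now runs over a single operator, it disappears and the formula collapses to
$$
\sigma_S Te=\inf\limits_{\varepsilon>0}\sup\{\rho Tf:\,\rho Sf\leq\varepsilon Se;\,f\in\mathcal{F}_{e};\,\rho\in\mathfrak{B}(F)\},
$$
which is exactly the second asserted equality.

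For the first identity I would pass to the complementary projection. Because $\sigma_S=(\pi_S)^{\bot}$, the two complementary band projections satisfy $\pi_S T+\sigma_S T=T$, whence $\pi_S Te=Te-\sigma_S Te$ for every $e\in E$. Substituting the formula just obtained and moving $Te$ through the nested supremum and infimum by means of the lattice identities $a-\inf_\alpha x_\alpha=\sup_\alpha(a-x_\alpha)$ and $a-\sup_\alpha x_\alpha=\inf_\alpha(a-x_\alpha)$, I get
$$
\pi_S Te=\sup\limits_{\varepsilon>0}\inf\{Te-\rho Tf:\,\rho Sf\leq\varepsilon Se;\,f\in\mathcal{F}_{e};\,\rho\in\mathfrak{B}(F)\}.
$$
Now the key algebraic step is orthogonal additivity: since $f\sqsubseteq e$ means $f\bot(e-f)$, we have $Te=Tf+T(e-f)$, and therefore $Te-\rho Tf=\rho T(e-f)+\rho^{\bot}Te$. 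Finally I would apply the change of variable $f\mapsto e-f$, which is an involution of $\mathcal{F}_{e}$; it turns the term $\rho T(e-f)$ into $\rho Tf$ and the constraint $\rho Sf\leq\varepsilon Se$ into $\rho S(e-f)\leq\varepsilon Se$, producing precisely
$$
\pi_S Te=\sup\limits_{\varepsilon>0}\inf\{\rho Tf+\rho^{\bot}Te:\,\rho S(e-f)\leq\varepsilon Se;\,f\in\mathcal{F}_{e};\,\rho\in\mathfrak{B}(F)\}.
$$

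The steps are mostly bookkeeping, and I expect the only genuine point requiring care to be the legitimacy of the order-theoretic manipulations: one must check that all the suprema and infima involved actually exist and that the identities $a-\inf=\sup(a-\cdot)$ and $a-\sup=\inf(a-\cdot)$ may be applied termwise inside the nested expression. This is guaranteed by the Dedekind completeness of $F$ together with the order bounds $0\leq\sigma_S T\leq T$ and $0\leq\pi_S T\leq T$, which keep every family order bounded; the passage of $\rho^{\bot}Te$ through the arithmetic and the involution of $\mathcal{F}_{e}$ are then routine.
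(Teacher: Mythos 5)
Your proposal is correct and follows exactly the route the paper intends: the paper states this corollary without proof as the specialization of Theorem~\ref{proj-1} to the (trivially increasing) singleton $A=\{S\}$, with the $\pi_S$ formula obtained from $\pi_S T=T-\sigma_S T$ via orthogonal additivity ($Te-\rho Tf=\rho T(e-f)+\rho^{\bot}Te$) and the involution $f\mapsto e-f$ of $\mathcal{F}_e$. Your derivation in fact confirms that the constraint in the first formula should read $\rho S(e-f)\leq\varepsilon Se$, consistent with the corollary as stated (and with the remark following it), rather than the $\rho Sf\leq\varepsilon Se$ appearing in the displayed expression for $\kappa(T)(e)$ inside the proof of Theorem~\ref{proj-1}.
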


Observe that $\rho_{Se}^{\bot}(\sigma_{S}Te)=\rho_{Se}^{\bot}Te$  and $\rho_{Se}(\pi_{S}Te)=\pi_{S}Te$,
where  $\rho_{Se}$  is the projection  on the band  $\{Se\}^{\bot\bot}$.  In particular,
$$
\sigma_{S}Te=\rho_{Se}Te+\inf\limits_{\varepsilon>0}\sup\{\rho Tf:\,\rho Sf\leq\varepsilon Se;\,
\rho\in[0,\rho_{Se}];\,f\in\mathcal{F}_{e}\}.
$$
Let $E,F$ be a vector lattices and $\varphi\in\mathcal{U}(E,\Bbb{R})$, $u\in F$. One-dimensional abstract Uryson operator $S:E\rightarrow F$, $Se=u\varphi(e)$ is denoted by $\varphi\otimes u$.

\begin{lemma}\label{lem:6}
Let $E,F$ be the same as in \ref{proj-1} and let $S=\varphi\otimes u$ be one-dimensional positive abstract Uryson operator, where $\varphi\in\mathcal{U}_{+}(E,\Bbb{R})$, $u\in F_{+}$.
Then the following formula valid
\begin{align*}
\pi_{\varphi\otimes u}Te=\sup\limits_{\varepsilon>0}\inf\limits_{f}\{\rho_{u} Te:\,\rho S(e-f)\leq\varepsilon \varphi(e),\,f\in\mathcal{F}_{e}\}.
\end{align*}
\end{lemma}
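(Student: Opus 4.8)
The plan is to obtain the formula as a direct specialization of Corollary~\ref{cor:2} to the single positive operator $S=\varphi\otimes u$, so that no new band‑projection machinery is required. The first move is purely computational. Since $Se=\varphi(e)\,u$ and $S(e-f)=\varphi(e-f)\,u$, the constraint $\rho S(e-f)\le\varepsilon Se$ occurring in the $\pi_S$–formula of Corollary~\ref{cor:2} turns into $\varphi(e-f)\,\rho u\le\varepsilon\,\varphi(e)\,u$, which is exactly the admissibility condition recorded in the statement (with $\varphi(e)$ standing for $\varphi(e)\,u$). Thus the whole task is to simplify the formula of Corollary~\ref{cor:2} once this substitution has been made.

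The conceptual step is to locate where the fixed projection $\rho_u$ enters. Here I would invoke the identity displayed immediately after Corollary~\ref{cor:2}, namely $\rho_{Se}(\pi_S Te)=\pi_S Te$. Because $u\in F_{+}$, for every $e$ with $\varphi(e)>0$ we have $\{Se\}^{\bot\bot}=\{\varphi(e)u\}^{\bot\bot}=\{u\}^{\bot\bot}$, so $\rho_{Se}=\rho_u$ and hence $\pi_S Te=\rho_u\pi_S Te$ already lives in the band $\{u\}^{\bot\bot}$. Applying $\rho_u$ to the formula of Corollary~\ref{cor:2} and using that an order projection is order continuous, and therefore commutes with the supremum over $\varepsilon$ and with the infimum, I would push $\rho_u$ inside the expression.

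The heart of the argument is the reduction of the free band projection $\rho$ to $\rho_u$. I would split an arbitrary $\rho\in\mathfrak{B}(F)$ along $\rho_u$ and $\rho_u^{\bot}$. Since $\rho_u^{\bot}u=0$, the $\rho_u^{\bot}$–part of $\rho$ disappears from the constraint $\varphi(e-f)\,\rho u\le\varepsilon\varphi(e)u$, leaving only $\rho\wedge\rho_u$; and since the objective has already been projected by $\rho_u$, only $\rho\wedge\rho_u$ survives there as well. This confines all relevant projections to $[0,\rho_u]$ and, together with the constraint forcing the admissible fragments $f$ to satisfy $\varphi(e-f)\,u\le\varepsilon\varphi(e)u$, collapses the projected objective to the asserted expression involving $\rho_u T$ alone.

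The step I expect to be the main obstacle is precisely this last collapse: showing that in the infimum the optimal projection may be taken equal to $\rho_u$, rather than as an infimum over all $\rho\in[0,\rho_u]$ of the quantity $\rho Tf+(\rho_u-\rho)Te$, which is affine in $\rho$. The mechanism is that driving $\varepsilon\downarrow 0$ in the outer supremum tightens the constraint so that the admissible fragments $f$ are exactly those for which $\varphi(e-f)u$ is small on the carrier of $u$, and on such fragments the choice $\rho=\rho_u$ is the minimizing one; making this rigorous requires the same partition–of–unity argument via Lemma~\ref{disjoint} that underlies Theorem~\ref{proj-1}. The only remaining point is the degenerate case $\varphi(e)=0$, where $Se=0$ and $\rho_{Se}=0$; there one checks directly, using $\varphi\ge 0$ together with the feasibility of $f=e$, that both sides vanish on $\{u\}^{\bot\bot}$, so the stated identity persists.
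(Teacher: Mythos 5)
Your proposal follows essentially the same route as the paper's proof: both specialize Corollary~\ref{cor:2} together with the displayed observation after it to $S=\varphi\otimes u$, rewrite the constraint $\rho Sf\leq\varepsilon Se$ as $\varphi(f)\rho u\leq\varepsilon\varphi(e)u$, use that $\rho_{Se}=\rho_{u}$ whenever $\varphi(e)\neq 0$, and conclude that the extremal projection is $\rho_{u}$. The one substantive divergence is the step you single out as the main obstacle: you propose to justify the collapse of the free projection $\rho$ to $\rho_{u}$ by rerunning the partition-of-unity argument of Lemma~\ref{disjoint}. That is a misdiagnosis --- the paper does not need it, and neither do you. Once $\rho$ is confined to $[0,\rho_{u}]$ the point is elementary: if $\varphi(f)\leq\varepsilon\varphi(e)$, then every $\rho\in[0,\rho_{u}]$ is feasible and, $T$ being positive, $\sup_{\rho\in[0,\rho_{u}]}\rho Tf=\rho_{u}Tf$; if instead $\varphi(f)>\varepsilon\varphi(e)$, applying $\rho$ to the constraint gives $\bigl(\varphi(f)-\varepsilon\varphi(e)\bigr)\rho u\leq 0$, forcing $\rho u=0$ and hence $\rho=0$ (since the range of $\rho$ lies in $\{u\}^{\bot\bot}$), so such fragments contribute nothing. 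Thus your plan closes, and more easily than you anticipate. Two side remarks: you are right to treat the degenerate case $\varphi(e)=0$ separately (the paper passes over it in silence), and the formula in the statement is misprinted --- the objective should involve $Tf$ rather than $Te$ and the bound should read $\varepsilon\varphi(e)u$ (equivalently $\varepsilon Se$) --- so the identity your argument actually delivers is the expression for $\sigma_{S}Te$ with which the paper's proof concludes.
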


\begin{proof}
Let $\rho_{u}$ order projection on the band $\{u\}^{\bot\bot}$ and $\rho\in[0,\rho_{u}]$. Then by conditions
$\rho Sf\leq\varepsilon Se$ we have $\varphi(f)\rho u\leq\varepsilon\varphi(e)u$. This inequality is equivalent the $\varphi(f)\leq\varepsilon\varphi(e)$. We observe that for $e\in E$ so that $\varphi(e)\neq 0$ we have $\rho_{Se}=\rho_{u}$ and conditions $\rho Se\leq\varepsilon Se$ do not depends of $\rho$. Thus  the  supremum   is attained  for $\rho=\rho_{u}$. Finally we have
$$
\sigma_{S}Te=\rho_{u}^{\bot}Te+
\inf_{\varepsilon>0}\sup\{\rho_{u}Tf:\,\varphi(f)\leq\varepsilon\varphi(e);\,f\in\mathcal{F}_{e}\}.
$$
\end{proof}

\begin{cor}\label{cor:4}
Let $H$ be a band generated one-dimensional  abstract Uryson operators.
Then for an arbitrary $T\in\mathcal{U}_{+}(E,F)$ and all $e\in E$ the following formula valid
\begin{align*}
\pi_{H}Te=\sup\limits_{\varepsilon>0,\varphi\in\mathcal{U}_{+}(E,\Bbb{R})}\inf\limits_{f}\{\rho_{u} Tx:\,\rho S(e-f)\leq\varepsilon \varphi(e),\,f\in\mathcal{F}_{e}\}.
\end{align*}
\end{cor}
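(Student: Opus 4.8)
The plan is to reduce the corollary to Lemma~\ref{lem:6} through the observation that $H$ is the supremum, in the Boolean algebra of band projections of $\mathcal{U}(E,F)$, of the bands generated by the individual one-dimensional operators. First I would record that the set $\mathcal{D}$ of positive one-dimensional operators $\varphi\otimes u$, with $\varphi\in\mathcal{U}_{+}(E,\Bbb{R})$ and $u\in F_{+}$, generates $H$, i.e. $H=\{\mathcal{D}\}^{\bot\bot}$. Since $\mathcal{U}(E,F)$ is Dedekind complete by Theorem~\ref{th-1}, its band projections form a complete Boolean algebra, and $\pi_{H}=\sup_{\varphi\otimes u}\pi_{\varphi\otimes u}$ in this algebra. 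The algebraic fact I would use here is that for positive operators $S_{1},\dots,S_{n}$ one has $\{S_{1}+\cdots+S_{n}\}^{\bot\bot}=\{S_{1}\}^{\bot\bot}\vee\cdots\vee\{S_{n}\}^{\bot\bot}$ (because $a\wedge\sum b_{i}=0$ iff $a\wedge b_{i}=0$ for all $i$ whenever $a,b_{i}\geq 0$), whence $\pi_{S_{1}+\cdots+S_{n}}=\pi_{S_{1}}\vee\cdots\vee\pi_{S_{n}}$; thus the finite suprema of the projections $\pi_{\varphi\otimes u}$ form an upward directed family with supremum $\pi_{H}$.

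Next I would pass from the Boolean supremum of projections to a pointwise formula. Fixing $T\in\mathcal{U}_{+}(E,F)$ and $e\in E$, I write $T_{\gamma}$ for the finite suprema $\pi_{S_{1}}T\vee\cdots\vee\pi_{S_{n}}T$; these equal $(\pi_{S_{1}}\vee\cdots\vee\pi_{S_{n}})T$ because finite suprema of band projections distribute over positive elements, and they satisfy $T_{\gamma}\uparrow\pi_{H}T$ in $\mathcal{U}_{+}(E,F)$. Exactly as in the proof of Theorem~\ref{proj-1}, where an increasing net $T_{\gamma}\uparrow\pi_{A}T$ was evaluated via $\pi_{A}Te=\sup_{\gamma}T_{\gamma}e$, the order continuity of evaluation along an increasing net gives $(\pi_{H}T)(e)=\sup_{\gamma}T_{\gamma}(e)$, and hence $\pi_{H}Te=\sup_{\varphi\otimes u}\pi_{\varphi\otimes u}Te$.

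Finally I would substitute the formula of Lemma~\ref{lem:6} for each $\pi_{\varphi\otimes u}Te$ and merge the supremum over $\varepsilon>0$ with the supremum over the one-dimensional operators $\varphi\otimes u$ into a single supremum, which produces the asserted expression.

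The hard part will be the pointwise passage in the second paragraph. Evaluation at $e$ is \emph{not} a lattice homomorphism on $\mathcal{U}(E,F)$: by Theorem~\ref{th-1}(1) one only has $(\pi_{S_{1}}T\vee\pi_{S_{2}}T)(e)=\sup\{(\pi_{S_{1}}T)g+(\pi_{S_{2}}T)h:\,e=g+h,\ g\bot h\}$, which may strictly exceed $(\pi_{S_{1}}T)(e)\vee(\pi_{S_{2}}T)(e)$. Consequently one cannot simply distribute the evaluation across the finite suprema $T_{\gamma}$; instead the identity $(\pi_{H}T)(e)=\sup_{\varphi\otimes u}\pi_{\varphi\otimes u}Te$ must be extracted from the increasing net $(T_{\gamma})$ together with order continuity of evaluation, and one must also check that collapsing the two suprema does not enlarge the admissible family of fragments $f\in\mathcal{F}_{e}$. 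This is the step where the argument needs the most care.
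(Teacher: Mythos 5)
The paper gives no written proof of this corollary; its evident intention is that the set $A$ of all positive one--dimensional operators $\varphi\otimes u$ is an \emph{increasing} subset of $\mathcal{U}_{+}(E,F)$ (any finite family $\varphi_{1}\otimes u_{1},\dots,\varphi_{n}\otimes u_{n}$ is dominated by the single one--dimensional operator $(\varphi_{1}+\dots+\varphi_{n})\otimes(u_{1}\vee\dots\vee u_{n})$), so that Theorem~\ref{proj-1} applies verbatim with this $A$; the formula for $\kappa(T)(e)=\pi_{A}Te$ there is already a supremum over $\varepsilon>0$ and over \emph{individual} $S\in A$, and Lemma~\ref{lem:6} simplifies the inner expression for each fixed $S=\varphi\otimes u$. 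Your route through the Boolean algebra of band projections is different, and its first and third paragraphs are fine, but the step you yourself flag as ``the hard part'' is left genuinely open. Order continuity of evaluation along an increasing net gives $\pi_{H}Te=\sup_{\gamma}T_{\gamma}(e)$ with $T_{\gamma}=(\pi_{S_{1}}\vee\dots\vee\pi_{S_{n}})T$, but this does not by itself yield the identity you need, $\pi_{H}Te=\sup_{\varphi\otimes u}\pi_{\varphi\otimes u}Te$: as you correctly observe, $T_{\gamma}(e)$ is computed via Theorem~\ref{th-1}(1) as a supremum over disjoint decompositions of $e$ and may strictly exceed $\max_{i}\pi_{S_{i}}T(e)$. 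Saying the identity ``must be extracted from the increasing net together with order continuity of evaluation'' names the obstacle without removing it.

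The missing idea is exactly the increasingness just mentioned: for $Q=(\sum_{i}\varphi_{i})\otimes(\bigvee_{i}u_{i})$ one has $0\leq S_{i}\leq Q$, hence $S_{i}\in\{Q\}^{\bot\bot}$ and $\pi_{S_{i}}\leq\pi_{Q}$ for every $i$, so $T_{\gamma}\leq\pi_{Q}T$ and therefore $T_{\gamma}(e)\leq\pi_{Q}Te\leq\sup_{\varphi\otimes u}\pi_{\varphi\otimes u}Te$, which closes your gap. With that observation in hand, however, your whole Boolean--algebra detour becomes unnecessary: the same observation is precisely the hypothesis of Theorem~\ref{proj-1}, which then delivers $\pi_{H}Te$ directly as a supremum over $\varepsilon>0$ and single one--dimensional $S$, after which Lemma~\ref{lem:6} (equivalently Corollary~\ref{cor:2}) identifies the inner infimum. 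I would also caution that ``substitute the formula of Lemma~\ref{lem:6} and merge the suprema'' should be spelled out, since the corollary's displayed formula (with its $\rho_{u}Tx$ and the constraint written in terms of $\varphi(e)$) is a simplification of the constraint $\rho Sf\leq\varepsilon Se$ to $\varphi(f)\leq\varepsilon\varphi(e)$ that is only justified by the computation inside the proof of Lemma~\ref{lem:6}, not by its statement alone.
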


\section{Laterally continuous part }

In this section we find formulas for calculating the laterally continuous parts  of an abstract Uryson operators.
Recall that a net $(x_\alpha)$ in a vector lattice $E$ \textit{laterally converges} to $x \in E$ if $x_\alpha \sqsubseteq x_\beta \sqsubseteq x$ for all indices $\alpha < \beta$ and $x_\alpha \overset{\rm o}\longrightarrow x$. In this case we write $x_\alpha \overset{\rm lat}\longrightarrow x$. For positive elements $x_\alpha, x$ the condition $x_\alpha \overset{\rm lat}\longrightarrow x$ means that $x_\alpha \sqsubseteq x$ and $x_\alpha \uparrow x$.

\begin{definition}
Let $E,F$ be vector lattices. An orthogonally additive operator $T:E\to X$ is called {\it laterally continuous}
if $T$  sends laterally convergent nets in $E$ to order convergent nets in $F$.
\end{definition}

The vector space of all laterally continuous abstract Uryson operators from $E$ to $F$ we denote by $\mathcal{U}_{n}(E,F)$. If $E,F$ are vector lattices with $F$ Dedekind complete the $\mathcal{U}_{n}(E,F)$
is a band in $\mathcal{U}(E,F)$ (\cite{Maz-1}, Proposition~3.8). Therefore every abstract Uryson operator $T$ has a unique decomposition $T=T_{n}+T_{s}$, where $T_{n}\in\mathcal{U}_{n}(E,F)$ and $T_{s}\in\mathcal{U}_{n}(E,F)^{\bot}$.
Operator $T_{n}$ is called a {\it  laterally continuous part} of $T$. The order projection on the band $\mathcal{U}_{n}(E,F)$ in the space $\mathcal{U}(E,F)$ we denote by $\pi_{n}$.

\begin{definition}\label{def:adm}
Let $D$ be a subset of a vector lattice $E$. The subset $D$ is called \it{admissible} if the following conditions hold
\begin{enumerate}
\item~if $x\in D$, then $y\in D$ for every $y\in\mathcal{F}_{x}$;
\item~if $x,y\in D$, $x\bot y$ then $x+y\in D$.
\end{enumerate}
\end{definition}

\begin{example}\label{adm-1}
Let $E$ be a vector lattice. Every order ideal in $E$ is an admissible set.
\end{example}

\begin{example}\label{adm-1}
Let $E$ be a vector lattice and $e\in E$. Then $\mathcal{F}_{e}$  is an admissible set.
\end{example}

\begin{example}\label{adm-1}
Let $E,F$ be a vector lattices and $T\in\mathcal{U}_{+}(E,F)$. Then $\mathcal{N}_{T}:=\{e\in E:\,T(e)=0\}$  is an admissible set.
\end{example}

Let $E,F$ be  vector lattices, with $F$ Dedekind complete, $T\in\mathcal{U}_{+}(E,F)$ and $D\subset E$ is an admissible set. Then we may
define a map  $\pi^{D}T:E\rightarrow F_{+}$ by formula
$$
\pi^{D}T(x)=\sup\{Ty:\,y\sqsubseteq x,\,y\in D\},
$$
for every $x\in E$.

\begin{lemma}\label{le:01}
Let $E,F$ be vector lattices  with $F$ Dedekind complete,  $T\in\mathcal{U}_{+}(E,F)$ and $D$ is an admissible set.
Then $\pi^{D}T$ is a positive abstract Uryson operator and $\pi^{D}T\in\mathcal{F}_{T}$.
\end{lemma}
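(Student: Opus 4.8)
The plan is to write $P:=\pi^{D}T$ and verify the three requirements in turn: that $P$ is well defined and positive, that it is orthogonally additive and order bounded, and finally that $P\wedge(T-P)=0$. For well-definedness I would first note that, $D$ being a nonempty admissible set, condition~(1) of Definition~\ref{def:adm} forces $0\in D$ (since $0\sqsubseteq d$ for any $d\in D$), so the set $\{Ty:\,y\sqsubseteq x,\ y\in D\}$ always contains $T0=0$; hence it is nonempty and $Px\geq 0$. Since $T$ is positive we have $T^{+}=T$, so formula~(3) of Theorem~\ref{th-1} reads $Tf=\sup\{Tg:\,g\sqsubseteq f\}$, which shows that $T$ is monotone along the fragment order, i.e. $z\sqsubseteq y$ implies $Tz\leq Ty$. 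In particular every $Ty$ occurring in the defining set is dominated by $Tx$, so the set is order bounded above, its supremum exists by Dedekind completeness of $F$, and $0\leq Px\leq Tx$. This sandwich immediately gives order boundedness of $P$: an order bounded $A\subset E$ has $T(A)$ order bounded above by some $b$, whence $P(A)\subset[0,b]$.

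The heart of the argument is orthogonal additivity. Fix disjoint $x_{1}\bot x_{2}$ and set $x=x_{1}+x_{2}$. I would use the standard description of the fragments of a disjoint sum: $y\sqsubseteq x$ if and only if $y=y_{1}+y_{2}$ with $y_{i}\sqsubseteq x_{i}$, and then automatically $y_{1}\bot y_{2}$. The decisive point is that the two admissibility axioms make this splitting compatible with the constraint $y\in D$ in both directions: if $y_{1}+y_{2}\in D$ then, since $y_{1}$ and $y_{2}$ are fragments of $y_{1}+y_{2}$, condition~(1) yields $y_{1},y_{2}\in D$; conversely, if $y_{1},y_{2}\in D$ are disjoint, condition~(2) yields $y_{1}+y_{2}\in D$. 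Using orthogonal additivity of $T$ on the disjoint pair, $T(y_{1}+y_{2})=Ty_{1}+Ty_{2}$, so the defining set for $Px$ is exactly $\{Ty_{1}+Ty_{2}:\,y_{i}\sqsubseteq x_{i},\ y_{i}\in D\}$. Taking suprema and invoking translation invariance of $\vee$ (the supremum of a sum over two independent index families splits as the sum of the suprema), I would conclude $Px=Px_{1}+Px_{2}$. At this stage $P\in\mathcal{U}_{+}(E,F)$.

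It remains to prove $P\in\mathcal{F}_{T}$, i.e. $(P\wedge(T-P))(f)=0$ for every $f$. Into the infimum formula~(2) of Theorem~\ref{th-1} I would feed only the special decompositions $f=(f-h)+h$ with $h\in\mathcal{F}_{f}\cap D$. Two facts make these decisive. First, for $h\in D$ one has $Ph=Th$: the supremum defining $Ph$ is attained at $w=h$ and, by monotonicity, bounded above by $Th$; hence $(T-P)h=0$. Second, for any fragment $z\sqsubseteq f-h$ with $z\in D$ the element $h+z$ lies in $\mathcal{F}_{f}\cap D$ and $T(h+z)=Th+Tz\leq Pf$, so $P(f-h)\leq Pf-Th$. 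Therefore $(P\wedge(T-P))(f)\leq P(f-h)+(T-P)h\leq Pf-Th$ for each such $h$, and passing to the infimum over $h$ gives $(P\wedge(T-P))(f)\leq Pf-\sup\{Th:\,h\in\mathcal{F}_{f}\cap D\}=Pf-Pf=0$; since the left-hand side is clearly $\geq 0$, it vanishes.

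I expect the orthogonal additivity step to be the main obstacle: one must check that conditions~(1) and~(2) of admissibility are precisely what is needed to let the membership constraint $y\in D$ factor across the disjoint decomposition $x=x_{1}+x_{2}$, and then justify interchanging the supremum with the sum. By contrast the disjointness part collapses to the single clean identity $\inf_{h}(Pf-Th)=Pf-\sup_{h}Th$ once the right family of test decompositions—the fragments of $f$ lying in $D$—has been singled out.
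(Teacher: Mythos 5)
Your proof is correct. For orthogonal additivity you follow essentially the same route as the paper: decompose a fragment $y\sqsubseteq x_{1}+x_{2}$ into disjoint fragments $y_{i}\sqsubseteq x_{i}$, observe that the two admissibility axioms let membership in $D$ pass back and forth across this splitting, and interchange the supremum with the sum. Where you genuinely diverge is the fragment property. The paper never computes $\pi^{D}T\wedge(T-\pi^{D}T)$: it fixes an order projection $\rho$ on $F$, notes that $\Pi(T)=\rho\,\pi^{D}T$ defines an idempotent map on $\mathcal{U}(E,F)$ with $0\leq\Pi(T)\leq T$, and invokes the abstract characterization of order projections (Aliprantis--Burkinshaw, Theorem~1.44) to conclude that $\Pi$ is an order projection, whence $\pi^{D}T$ is a fragment of $T$. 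You instead verify disjointness directly from formula~(2) of Theorem~\ref{th-1}, testing the infimum only on the decompositions $f=(f-h)+h$ with $h\in\mathcal{F}_{f}\cap D$ and using the two observations $(T-\pi^{D}T)h=0$ and $\pi^{D}T(f-h)\leq\pi^{D}Tf-Th$, so that the infimum collapses to $\pi^{D}Tf-\sup_{h}Th=0$. Your computation is more self-contained and elementary---it relies only on the lattice formulas already quoted in the paper and also supplies the nonemptiness and boundedness checks the paper leaves implicit---at the cost of being a little longer; the paper's argument is shorter and delivers the stronger structural fact that $T\mapsto\pi^{D}T$ extends to an order projection on all of $\mathcal{U}(E,F)$, which is precisely what Lemma~\ref{satur} later uses and which your pointwise verification does not immediately give.
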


\begin{proof}
Let  show that $\pi^{D}(T)\in\mathcal{U}_{+}(E,F)$.
Fix $x,y\in E$, $x\bot y$. If $z\sqsubseteq x+y$ and $z\in D$ by Riesz decomposition property,
there exist $z_{1},z_{2}$ such that $z_{1}+z_{2}=z$, $z_{1}\bot z_{2}$ and $z_{1},z_{2}\in D$. Then we have
$$
Tz=T(z_{1}+z_{2})=T(z_{1})+T(z_{2}),
$$
and therefore $\pi^{D}(T)(x+y)\leq\pi^{D}(T)(x)+\pi^{D}(T)(y)$. Let us to prove a reverse inequality.
If $z_{1}\sqsubseteq x$, $z_{1}\in D$ and  $z_{2}\sqsubseteq y$, $z_{1}\in D$ we have $z_{1}+z_{2}\in D$. Hence
$$
T(z_{1})+T(z_{2})=T(z_{1}+z_{2})\leq\pi_{D}(T)(x+y).
$$
Take a supremum in the left hand we may write
$$
\pi^{D}(T)(x)+\pi^{D}(T)(y)\leq\pi^{D}(T)(x+y).
$$
Finally we have
$$
\pi^{D}(T)(x)+\pi^{D}(T)(y)=\pi^{D}(T)(x+y).
$$
Now fix an order projection $\rho$  on the
Dedekind complete vector lattice space $F$.
Then the operator $\Pi:\mathcal{U}(E,F)\rightarrow\mathcal{U}(E,F)$ define by $\Pi(T)=\rho\pi^{D}T$
satisfies $0\leq\Pi(T)\leq T$ for each $T\in\mathcal{U}_{+}(E,F)$ and $\Pi^{2}=\Pi$. Thus, by (\cite{Al},Theorem~$1.44$)
the operator $\Pi$ is an order projection on $\mathcal{U}(E,F)$. Consequently,
if $T$ is a positive abstract Uryson operator, then for  each admissible set $D\subset E$
the operator $\pi^{D}T$ is a fragment  of $T$.
\end{proof}

If $D=\mathcal{F}_{x}$ we shall denote
operator $\pi^{D}T$ by $\pi^{x}T$.

\begin{definition}
An admissible set $D$  is
called    {\it laterally   dense } if for every $e\in E$  there exists a  laterally convergence net  $(e_{\alpha})_{\alpha\in\Lambda}\subset D$  such  that
$e_\alpha \overset{\rm lat}\longrightarrow e$. The set of all admissible  subsets (admissible  laterally dense subsets)  of $E$ we denote by $AD(E)$ ($ADD(E)$).
\end{definition}

Now we are ready to formulate the main result of this section.

\begin{thm}\label{projection}
Let $E,F$ be vector lattices  with $F$ Dedekind complete.
Then laterally continuous part of an abstract Uryson operator $T:E\to F$ may be calculated by formula $T_{n}=\pi_{n}T$ where $\pi_{n}=\inf\{\pi^{D}:\,D\in ADD(E)\}$.
\end{thm}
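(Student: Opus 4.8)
Write $P_{n}$ for the order projection of $\mathcal{U}(E,F)$ onto the band $\mathcal{U}_{n}(E,F)$, so that $T_{n}=P_{n}T$ by the very definition of the laterally continuous part; the assertion is then the identity $P_{n}=\inf\{\pi^{D}:D\in ADD(E)\}$. First I would note, taking $\rho$ to be the identity in Lemma~\ref{le:01}, that each $\pi^{D}$ is an order projection on $\mathcal{U}(E,F)$. Since $\mathcal{U}(E,F)$ is Dedekind complete (Theorem~\ref{th-1}) its order projections form a complete Boolean algebra, so $\pi_{n}:=\inf\{\pi^{D}\}$ exists and is the order projection onto the intersection of the ranges of the $\pi^{D}$. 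Hence the theorem reduces to the two inclusions $\mathcal{U}_{n}(E,F)\subseteq\operatorname{Range}(\pi^{D})$ for every $D$, and $\bigcap_{D}\operatorname{Range}(\pi^{D})\subseteq\mathcal{U}_{n}(E,F)$; by the decomposition $T=T^{+}-T^{-}$ and linearity of band projections it suffices to argue for positive operators throughout.

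For the first inclusion fix $R\in\mathcal{U}_{+}(E,F)$ laterally continuous, $e\in E$ and $D\in ADD(E)$, and choose a net $(e_{\alpha})\subset D$ with $e_{\alpha}\overset{\rm lat}\longrightarrow e$. Because $e_{\alpha}\sqsubseteq e_{\beta}$ for $\alpha<\beta$, orthogonal additivity and positivity of $R$ give $Re_{\alpha}\leq Re_{\beta}$, while lateral continuity gives $Re_{\alpha}\overset{\rm o}\longrightarrow Re$; an increasing order-convergent net attains its limit as a supremum, so $\sup_{\alpha}Re_{\alpha}=Re$ and therefore $\pi^{D}R(e)\geq Re$. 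As $\pi^{D}R\leq R$ by Lemma~\ref{le:01}, we conclude $\pi^{D}R=R$, i.e. $R\in\operatorname{Range}(\pi^{D})$. This yields $P_{n}\leq\pi_{n}$.

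For the reverse inclusion I would argue by contraposition: if $T\in\mathcal{U}_{+}(E,F)$ is \emph{not} laterally continuous I will exhibit $D\in ADD(E)$ with $\pi^{D}T\neq T$. Lateral discontinuity produces a net $x_{\alpha}\uparrow x$ with $x_{\alpha}\sqsubseteq x$ for which $v:=\sup_{\alpha}Tx_{\alpha}<Tx$, the supremum existing and the gap being nonzero by Dedekind completeness of $F$. Set $D_{x}=\bigcup_{\alpha}\mathcal{F}_{x_{\alpha}}$ and $D=\{y\in E:\mathcal{F}_{y}\cap\mathcal{F}_{x}\subseteq D_{x}\}$. Routine checks, using that disjoint fragments of $x$ add up to a fragment of some $x_{\gamma}$ (directedness of the net) and that $D_{x}$ is closed under fragments and disjoint sums, show that $D_{x}$ is admissible and that $D$ is admissible; moreover $y\sqsubseteq x$ and $y\in D$ force $y\in\mathcal{F}_{y}\cap\mathcal{F}_{x}\subseteq D_{x}$, so $D\cap\mathcal{F}_{x}=D_{x}$. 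Consequently $\pi^{D}T(x)=\sup\{Ty:y\in D_{x}\}=\sup_{\alpha}Tx_{\alpha}=v<Tx$, whence $\pi^{D}T\neq T$. Together with the previous step this gives $\pi_{n}=P_{n}$ and hence $T_{n}=P_{n}T=\pi_{n}T$.

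The crux — and the step I expect to resist a short argument — is verifying that this $D$ is laterally dense, i.e. that \emph{every} $e\in E$ (not just fragments of $x$) carries a laterally convergent net drawn from $D$. Since $E$ is not assumed Dedekind complete, I cannot split an arbitrary fragment of $e$ into a part lying in the band $\{x\}^{\bot\bot}$ and a part in $\{x\}^{\bot}$; instead I would build the approximating net $e_{\beta}\sqsubseteq e$ by routing the approximation through $D_{x}$ precisely on the overlap of $e$ with $\mathcal{F}_{x}$ and leaving it unconstrained elsewhere, relying only on the Boolean algebra structure of $\mathcal{F}_{x}$ (where $x_{\alpha}\wedge y\uparrow y$ for every $y\sqsubseteq x$, which also shows $D_{x}$ laterally approximates each fragment of $x$). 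Making this selection legitimate with no ambient projection available in $E$ is the delicate technical point underpinning the whole theorem.
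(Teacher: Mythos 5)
Your strategy is genuinely different from the paper's: you identify the band of $\inf\{\pi^{D}\}$ directly as $\{T:\pi^{D}T=T\ \text{for all}\ D\in ADD(E)\}$ and compare it with $\mathcal{U}_{n}(E,F)$, whereas the paper first proves $\mathcal{U}_{n}(E,F)=\mathcal{U}_{s}(E,F)^{\bot}$ (Theorem~\ref{singular}, via the auxiliary operator $S=\inf_{\alpha}\pi^{f_{\alpha}}T$) and then applies Lemma~\ref{satur} on upward saturated families to the family $ADD(E)$. Your first inclusion --- a positive laterally continuous $R$ satisfies $\pi^{D}R=R$ for every laterally dense admissible $D$, because $\sup_{\alpha}Re_{\alpha}=Re$ for any net $(e_{\alpha})\subset D$ with $e_{\alpha}\overset{\rm lat}\longrightarrow e$ --- is correct and complete, and the reduction to positive operators is legitimate since two bands coincide iff their positive parts do.

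The converse, however, contains a genuine gap, and it is exactly where you locate it: you never prove that $D=\{y\in E:\mathcal{F}_{y}\cap\mathcal{F}_{x}\subseteq D_{x}\}$ is laterally dense, and without that $D\notin ADD(E)$, so the inequality $\pi^{D}T(x)=v<Tx$ says nothing about $\inf\{\pi^{D}:D\in ADD(E)\}$. This is not a routine verification. For $e\perp x$, or for $e=x'+z$ with $x'\in D_{x}$ and $z\perp x$, the required net exists; but for a general $e$ in a vector lattice without the principal projection property there is no way to split a fragment of $e$ into a part controlled by $D_{x}$ and a part disjoint from $x$, and a single fragment $w$ common to $e$ and $x$ with $w\notin D_{x}$ excludes from $D$ every fragment of $e$ containing $w$, potentially blocking all laterally convergent nets in $D$ from reaching $e$. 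You should be aware that the paper's own proof founders on the same rock: Theorem~\ref{singular} rests on the unproved observation that every order dense ideal is laterally dense, which fails in general (in $C[0,1]$ the only fragments of $\mathbf{1}$ are $0$ and $\mathbf{1}$, so every laterally dense set must contain $\mathbf{1}$, while many order dense ideals do not); the closing Remark of the paper notes the result was originally obtained under the principal projection property, which is precisely what makes such density claims work. So the difficulty you flag is the real mathematical content of the theorem, and as written your argument (like the paper's) does not close it.
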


First we need  some auxiliary notions.

\begin{definition}
An abstract Uryson operator $T:E\to F$  is called {\it singular}    if it  vanishes on  some  laterally   dense  admissible set.  The  set of   all  singular   operators  is denoted  by $\mathcal{U}_{s}(E,F)$.
\end{definition}

Observe that every order dense ideal of the $E$ is a laterally dense admissible set.

\begin{thm}\label{singular}
Let $E,F$ be the same as in Theorem~\ref{projection}.
Then $\mathcal{U}_{s}(E,F)^{\bot}=\mathcal{U}_{n}(E,F)$.
\end{thm}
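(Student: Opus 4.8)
The plan is to prove the equivalent band identity $\mathcal{U}_{s}(E,F)^{\bot\bot}=\mathcal{U}_{n}(E,F)^{\bot}$; the statement then follows by taking disjoint complements, since $\mathcal{U}_{n}(E,F)$ is a band and hence equals $\mathcal{U}_{n}^{\bot\bot}$. First I would dispose of the easy inclusion $\mathcal{U}_{s}\subseteq\mathcal{U}_{n}^{\bot}$, i.e. that every singular operator is disjoint from every laterally continuous one. To see this take positive $T\in\mathcal{U}_{n}$ and positive $S\in\mathcal{U}_{s}$, let $D$ be a laterally dense admissible set on which $S$ vanishes, and fix $e\in E$. Picking a net $(e_{\alpha})\subset D$ with $e_{\alpha}\overset{\rm lat}\longrightarrow e$ and using $e_{\alpha}\bot(e-e_{\alpha})$, the meet formula of Theorem~\ref{th-1}(2) gives $0\leq(T\wedge S)(e)\leq T(e-e_{\alpha})+S(e_{\alpha})=Te-Te_{\alpha}$, since $e_{\alpha}\in D$ forces $Se_{\alpha}=0$. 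As $T$ is laterally continuous we have $Te_{\alpha}\overset{\rm o}\longrightarrow Te$, so the right-hand side order-converges to $0$ while dominating the fixed element $(T\wedge S)(e)\geq 0$; hence $(T\wedge S)(e)=0$. Since $e$ was arbitrary, $T\wedge S=0$. This yields both $\mathcal{U}_{n}\subseteq\mathcal{U}_{s}^{\bot}$ and $\mathcal{U}_{s}^{\bot\bot}\subseteq\mathcal{U}_{n}^{\bot}$.

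For the reverse inclusion $\mathcal{U}_{n}^{\bot}\subseteq\mathcal{U}_{s}^{\bot\bot}$ the key device is the family of fragments $\pi^{D}T$ from Lemma~\ref{le:01}. For positive $T$ and any admissible $D$ set $\sigma^{D}T:=T-\pi^{D}T=(\pi^{D})^{\bot}T$, which is again a fragment of $T$ because $\pi^{D}$ is an order projection on $\mathcal{U}(E,F)$ by Lemma~\ref{le:01}. I would first observe that $\sigma^{D}T$ vanishes on $D$: for $x\in D$ the supremum defining $\pi^{D}T(x)$ is attained at $y=x$, so $\pi^{D}T(x)=Tx$. Consequently, as soon as $D\in ADD(E)$ the fragment $\sigma^{D}T$ vanishes on a laterally dense admissible set, i.e. $\sigma^{D}T\in\mathcal{U}_{s}$. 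Using Theorem~\ref{projection} to write the non-laterally-continuous part as $\pi_{n}^{\bot}T=T-\inf_{D\in ADD(E)}\pi^{D}T=\sup_{D\in ADD(E)}\sigma^{D}T$, I exhibit $\pi_{n}^{\bot}T$ as an order supremum of singular fragments; since $\mathcal{U}_{s}^{\bot\bot}$ is an order-closed band containing each $\sigma^{D}T$, it follows that $\pi_{n}^{\bot}T\in\mathcal{U}_{s}^{\bot\bot}$ for every positive $T$. Applying this to $T\in\mathcal{U}_{n}^{\bot}$ (where $\pi_{n}^{\bot}T=T$) gives $\mathcal{U}_{n}^{\bot}\subseteq\mathcal{U}_{s}^{\bot\bot}$, and combined with the previous paragraph we obtain $\mathcal{U}_{s}^{\bot\bot}=\mathcal{U}_{n}^{\bot}$, hence $\mathcal{U}_{s}^{\bot}=\mathcal{U}_{n}$.

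The hard part will be the step ``not laterally continuous $\Rightarrow$ admits a nonzero singular fragment'', which is exactly where the above leans on Theorem~\ref{projection}. If instead Theorem~\ref{singular} is intended to precede Theorem~\ref{projection}, I would have to show directly that $\pi^{D}T=T$ for all $D\in ADD(E)$ forces $T\in\mathcal{U}_{n}$. Given a positive lateral net $e_{\alpha}\uparrow e$ one wants a laterally dense admissible set $D$ whose fragments below $e$ are cofinally the $e_{\alpha}$, so that $Te=\pi^{D}T(e)=\sup_{\alpha}Te_{\alpha}$, which is precisely lateral continuity at that net. The natural candidate $D=\{y\in E:\ y\bot(e-e_{\alpha})\ \text{for some}\ \alpha\}$ is admissible and, as one checks, its fragments below $e$ are exactly the $y\sqsubseteq e_{\alpha}$; but verifying that $D$ laterally approximates an arbitrary $z\in E$ calls for the band component of $z$ in $\{e-e_{\alpha}\}^{\bot}$, which need not exist because $E$ is only a vector lattice, not Dedekind complete. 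Circumventing this --- either by a transfinite/Zorn construction of a laterally dense admissible set adapted to the net using fragments alone, or by proving the identity $\pi_{n}^{\bot}T=\sup_{D}\sigma^{D}T$ independently of Theorem~\ref{projection} --- is the genuine obstacle; the remaining bookkeeping (reduction to positive operators via the band structure, and the signed case in the disjointness computation) is routine.
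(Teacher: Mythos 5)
Your easy inclusion ($\mathcal{U}_{s}\subseteq\mathcal{U}_{n}^{\bot}$) is correct and is in substance the paper's own first step: the paper observes that $R=T\wedge S$ vanishes on the laterally dense admissible set and, being dominated by the laterally continuous $T$, is itself laterally continuous, hence zero; your explicit estimate $(T\wedge S)(e)\leq T(e-e_{\alpha})+S(e_{\alpha})=Te-Te_{\alpha}$ is the same computation made concrete. The hard direction, however, is not proved, and you have diagnosed the reason yourself. Invoking Theorem~\ref{projection} is circular in the paper's logical order: the paper derives Theorem~\ref{projection} as a corollary of Lemma~\ref{satur} \emph{together with} Theorem~\ref{singular} --- Lemma~\ref{satur} only identifies $\inf\{\pi^{D}:D\in ADD(E)\}$ as the band projection onto $\mathcal{U}_{s}(E,F)^{\bot}$, and identifying that band with $\mathcal{U}_{n}(E,F)$ is exactly the content of the theorem you are trying to prove. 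Your fallback --- constructing a laterally dense admissible set whose fragments below $e$ are cofinally the $e_{\alpha}$ --- founders, as you note, on the absence of band components in a non-Dedekind-complete $E$.

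The paper escapes this by reversing the order of construction: the laterally dense set is obtained a posteriori as the null set of a derived operator, not a priori from the net. Given positive $T\in\mathcal{U}_{s}(E,F)^{\bot}$ failing lateral continuity at $e_{\alpha}\overset{\rm lat}\longrightarrow e$, put $f_{\alpha}=e-e_{\alpha}$ and $S=\inf_{\alpha}\pi^{f_{\alpha}}T$, a fragment of $T$ with $Se=Te-\sup_{\alpha}Te_{\alpha}>0$. One checks that $Sg=0$ whenever $g\bot f_{\alpha_{0}}$ for some $\alpha_{0}$, so that $\widetilde{E}=\{g\in E:\,Sg=0\}$ is an order ideal, and it is order dense because $f_{\alpha}\overset{\rm o}\longrightarrow 0$. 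Since every order dense ideal is a laterally dense admissible set, $S$ is a nonzero singular operator with $0\leq S\leq T$, contradicting $T\bot\mathcal{U}_{s}(E,F)$. This is precisely the step your proposal is missing: the witness to singularity vanishes on an order dense \emph{ideal}, which is automatically laterally dense and admissible, so no band components of arbitrary elements relative to the net are ever needed.
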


\begin{proof}
Let $T$ be a laterally continuous operator and $T\notin\mathcal{U}_{s}(E,F)^{\bot}$. Then there exists $S\in\mathcal{U}_{s}(E,F)^{\bot}$ such that $R:=T\wedge S>0$. Thus  $R$ vanishes on some laterally dense admissible set and since $0\leq R\leq T$, we have $R$ is a laterally continuous abstract Uryson operator. Therefore $R=0$ and $T\in\mathcal{U}_{s}(E,F)^{\bot}$. Let us prove  the reverse inclusion. Let $T\in\mathcal{U}_{s}(E,F)^{\bot}$, $T\geq 0$ and assume that $T$ is not laterally continuous. Then there exits  an element $e\in E$ and laterally convergence $(e_{\alpha})_{\alpha\in\Lambda}\subset E$, $e=\olim_{\alpha}e_{\alpha}$, such that $\olim_{\alpha}Te_{\alpha}=\sup_{\alpha}Te_{\alpha}<Te$. Denote $f_{\alpha}=e-e_{\alpha}$. Observe that
$$
e=f_{\alpha}+e_{\alpha};\,f_{\alpha}\bot e_{\alpha};\,
Te=T(f_{\alpha}+e_{\alpha})=Tf_{\alpha}+Te_{\alpha};
$$
$$
\alpha\in\Lambda;\,
T(e-e_{\alpha})=Tf_{\alpha}=Te-Te_{\alpha}.
$$
Consider the net of the fragments $S_{\alpha}$ of the operator $T$ defined by formula
$$
S_{\alpha}:=\pi^{f_{\alpha}}T;\,\alpha\in\Lambda
$$
and take the operator $S=\inf_{\alpha}S_{\alpha}$. The operator $S:E\to F$ is nonzero  positive laterally continuous abstract Uryson operator. Indeed
$$
Se=\olim_{\alpha}S_{\alpha}e=\olim_{\alpha}Tf_{\alpha}=
\olim_{\alpha}T(e-e_{\alpha})=Te-\olim_{\alpha}Te_{\alpha}>0.
$$
Now we may prove that   $S$ is a singular operator. Remark that $Sg=0$ for every $g\in E$, $g\bot f_{\alpha_{0}}$ for
some $\alpha_{0}\in\Lambda$. Denote the set $\{g\in E:\,Sg=0\}$ by $\widetilde{E}$. It is clear that $\widetilde{E}$ is a order ideal. Let us prove that $\widetilde{E}$ is order dense. Indeed, assume that there is $f\in E_{+}$, so that
$$
0<f\wedge|f_{\alpha}|,\,\text{for every}\,\,\alpha\in\Lambda.
$$
But $\olim_{\alpha}f_{\alpha}=0$, it is a contradiction.
\end{proof}

A family $\mathfrak{A}$ of an admissible subsets of $E$ is called {\it upward saturated} if for every $A\in\mathfrak{A}$, $B\in AD(E)$, $B\supset A$ we have $B\in\mathfrak{A}$. By $\mathcal{U}_{\mathfrak{A}}(E,F)$
denote the set $\{T\in\mathcal{U}(E,F):\,\mathcal{N}_{T}\in\mathfrak{A}\}$.

\begin{lemma}\label{satur}
Let $E,F$ be the same as in Theorem~\ref{projection} and  $\mathfrak{A}$ be a family of an upward saturated admissible subsets of $E$. Then
$\pi^{\mathfrak{A}}:=\inf\{\pi^{D}:\,D\in\mathfrak{A}\}$ is a band projection to $\mathcal{U}_{\mathfrak{A}}(E,F)^{\bot}$.
\end{lemma}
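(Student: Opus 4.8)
The plan is to show that $\pi^{\mathfrak{A}}$ is an order projection and that its range equals $\mathcal{U}_{\mathfrak{A}}(E,F)^{\bot}$; equivalently, that its kernel is the band generated by $\mathcal{U}_{\mathfrak{A}}(E,F)$. First I would record that each $\pi^{D}$ with $D\in\mathfrak{A}$ is an order projection on $\mathcal{U}(E,F)$: this is exactly the content of Lemma~\ref{le:01} with $\rho$ the identity projection on $F$. Since $\mathcal{U}(E,F)$ is Dedekind complete (Theorem~\ref{th-1}), its order projections form a complete Boolean algebra, so the infimum $\pi^{\mathfrak{A}}=\inf\{\pi^{D}:D\in\mathfrak{A}\}$ exists there, is again an order projection, and is computed pointwise on the positive cone: $\pi^{\mathfrak{A}}Te=\inf_{D}\pi^{D}Te$ for $T\in\mathcal{U}_{+}(E,F)$ and $e\in E$.

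The computational heart is the elementary observation that, for $T\in\mathcal{U}_{+}(E,F)$ and admissible $D$, the complementary fragment $R:=T-\pi^{D}T$ vanishes on $D$. Indeed, for $x\in D$ we may take $y=x$ in the defining supremum $\pi^{D}T(x)=\sup\{Ty:y\sqsubseteq x,\ y\in D\}$, so $\pi^{D}T(x)=Tx$ and $R(x)=0$; thus $D\subseteq\mathcal{N}_{R}$. As a first consequence, if $T\in\mathcal{U}_{\mathfrak{A}}(E,F)\cap\mathcal{U}_{+}(E,F)$ then $\mathcal{N}_{T}\in\mathfrak{A}$, and since $Ty=0$ for every $y\in\mathcal{N}_{T}$ we get $\pi^{\mathcal{N}_{T}}T=0$, whence $\pi^{\mathfrak{A}}T\le\pi^{\mathcal{N}_{T}}T=0$. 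Therefore $\mathcal{U}_{\mathfrak{A}}(E,F)\cap\mathcal{U}_{+}(E,F)\subseteq\ker\pi^{\mathfrak{A}}$.

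For the reverse and main inclusion I would take $T\in\mathcal{U}_{\mathfrak{A}}(E,F)^{\bot}$ with $T\ge 0$, fix $D\in\mathfrak{A}$, and prove $\pi^{D}T=T$. Put $R=T-\pi^{D}T\ge 0$, a fragment of $T$. By the observation $D\subseteq\mathcal{N}_{R}$, and $\mathcal{N}_{R}$ is admissible because $R$ is positive (Example~\ref{adm-1}); since $D\in\mathfrak{A}$, $\mathcal{N}_{R}\in AD(E)$, and $\mathcal{N}_{R}\supseteq D$, the upward saturation of $\mathfrak{A}$ forces $\mathcal{N}_{R}\in\mathfrak{A}$, i.e. $R\in\mathcal{U}_{\mathfrak{A}}(E,F)$. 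On the other hand $0\le R\le T$ and $\mathcal{U}_{\mathfrak{A}}(E,F)^{\bot}$ is a band, so $R\in\mathcal{U}_{\mathfrak{A}}(E,F)^{\bot}$; as $R$ then belongs both to $\mathcal{U}_{\mathfrak{A}}(E,F)$ and to its disjoint complement, $R\bot R$ and hence $R=0$. Thus $\pi^{D}T=T$ for every $D\in\mathfrak{A}$, so $\pi^{\mathfrak{A}}T=T$. This is the step where the saturation hypothesis is indispensable, and the one I expect to be the main obstacle; it yields $\mathcal{U}_{\mathfrak{A}}(E,F)^{\bot}\cap\mathcal{U}_{+}(E,F)\subseteq\operatorname{ran}\pi^{\mathfrak{A}}$, hence $\mathcal{U}_{\mathfrak{A}}(E,F)^{\bot}\subseteq\operatorname{ran}\pi^{\mathfrak{A}}$ since both sides are bands determined by their positive parts.

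Finally I would assemble the two bounds. From the inclusion of the second paragraph, $\operatorname{ran}\pi^{\mathfrak{A}}=(\ker\pi^{\mathfrak{A}})^{\bot}\subseteq\bigl(\mathcal{U}_{\mathfrak{A}}(E,F)\cap\mathcal{U}_{+}(E,F)\bigr)^{\bot}$, while the third paragraph gives the opposite inclusion against $\mathcal{U}_{\mathfrak{A}}(E,F)^{\bot}$. To see these two disjoint complements coincide I note that for any $S\in\mathcal{U}_{\mathfrak{A}}(E,F)$ the set $\mathcal{N}_{S}$ is admissible, so every fragment of an element of $\mathcal{N}_{S}$ lies in $\mathcal{N}_{S}$; using the formulas for $S^{+}(e)$ and $S^{-}(e)$ from parts (3)--(4) of Theorem~\ref{th-1} this gives $\mathcal{N}_{|S|}=\mathcal{N}_{S}\in\mathfrak{A}$, whence $|S|\in\mathcal{U}_{\mathfrak{A}}(E,F)\cap\mathcal{U}_{+}(E,F)$ and consequently $\bigl(\mathcal{U}_{\mathfrak{A}}(E,F)\cap\mathcal{U}_{+}(E,F)\bigr)^{\bot}=\mathcal{U}_{\mathfrak{A}}(E,F)^{\bot}$. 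Combining, $\operatorname{ran}\pi^{\mathfrak{A}}=\mathcal{U}_{\mathfrak{A}}(E,F)^{\bot}$, which is the assertion.
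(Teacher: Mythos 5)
Your proposal is correct and follows essentially the same route as the paper: the kernel inclusion via $\pi^{\mathfrak{A}}T\leq\pi^{\mathcal{N}_{T}}T=0$ for $T\in\mathcal{U}_{\mathfrak{A}}(E,F)$, and the range inclusion by observing that $T-\pi^{D}T$ vanishes on $D$ and therefore lies in $\mathcal{U}_{\mathfrak{A}}(E,F)\cap\mathcal{U}_{\mathfrak{A}}(E,F)^{\bot}=\{0\}$. You merely make explicit two steps the paper leaves implicit --- the use of upward saturation to place $\mathcal{N}_{T-\pi^{D}T}$ in $\mathfrak{A}$, and the identification $\bigl(\mathcal{U}_{\mathfrak{A}}(E,F)\cap\mathcal{U}_{+}(E,F)\bigr)^{\bot}=\mathcal{U}_{\mathfrak{A}}(E,F)^{\bot}$ via $\mathcal{N}_{|S|}=\mathcal{N}_{S}$ --- which is a welcome clarification rather than a different argument.
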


\begin{proof}
It is clear that $\pi^{\mathfrak{A}}$  is some band projection in $\mathcal{U}(E,F)$.  Denote by $\rho$ the projection to $\mathcal{U}_{\mathfrak{A}}(E,F)^{\bot}$.
If $T\in\mathcal{U}_{\mathfrak{A}}(E,F)$
then  $\pi^{\mathfrak{A}}T\leq\pi^{\mathcal{N}_{T}}T=0$. Hence $\pi^{\mathfrak{A}}\leq\rho$.
Conversely, suppose  that  $T\in\mathcal{U}_{\mathfrak{A}}(E,F)^{\bot}$.  For
every admissible set $D\in\mathfrak{A}$ the operator  $T-\pi^{D}T$ vanishes
on  $D$.  Consequently,
$T-\pi^{D}T\in\mathcal{U}_{\mathfrak{A}}(E,F)\cap\mathcal{U}_{\mathfrak{A}}(E,F)^{\bot}=0$.
Thus,  $T=\pi^{D}T$ and using the fact that $D$ is arbitrary, we have $T=\pi^{\mathfrak{A}}T$ and  therefore $\pi^{\mathfrak{A}}\geq\rho$.
\end{proof}

\begin{cor}[Theorem~\ref{projection}]
Let $E,F$ be a same as in Theorem~\ref{projection} . Then the projection
$$
\pi_{n}=\inf\{\pi^{D}:\,D\in ADD(E)\}
$$
is  the  respective  band  projections  in  the  space $\mathcal{U}(E,F)$  to
$\mathcal{U}_{s}(E,F)^{\bot}$.
\end{cor}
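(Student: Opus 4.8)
The plan is to obtain Theorem~\ref{projection} by specializing Lemma~\ref{satur} to the family $\mathfrak{A}=ADD(E)$ of all admissible laterally dense subsets of $E$, and then to identify the resulting band by means of Theorem~\ref{singular}. Accordingly, three things must be checked: that $ADD(E)$ is upward saturated, so that Lemma~\ref{satur} applies; that the operator class $\mathcal{U}_{ADD(E)}(E,F)=\{T:\mathcal{N}_{T}\in ADD(E)\}$ coincides with the class $\mathcal{U}_{s}(E,F)$ of singular operators; and finally that the disjoint complement is the laterally continuous band.

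First I would verify the upward saturation of $ADD(E)$. Suppose $A\in ADD(E)$ and $B\in AD(E)$ with $B\supseteq A$. Since $A$ is laterally dense, for every $e\in E$ there is a net $(e_{\alpha})\subset A$ with $e_{\alpha}\overset{\rm lat}\longrightarrow e$; as $A\subseteq B$, this same net lies in $B$, so $B$ is laterally dense and hence $B\in ADD(E)$. This is precisely the upward saturation required in Lemma~\ref{satur}, and the family is nonempty because every order dense ideal of $E$ belongs to it, as already observed.

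Next I would match the two descriptions of the band. For a positive operator $T$ the null set $\mathcal{N}_{T}=\{e\in E:Te=0\}$ is admissible, as recorded among the examples above, and $T$ is singular precisely when it vanishes on some $D\in ADD(E)$, that is, when $D\subseteq\mathcal{N}_{T}$ for some laterally dense admissible $D$. By the same net-inclusion argument used for upward saturation, the presence of a laterally dense subset inside $\mathcal{N}_{T}$ forces $\mathcal{N}_{T}$ itself to be laterally dense, so $\mathcal{N}_{T}\in ADD(E)$; the converse is immediate. Hence $T\in\mathcal{U}_{s}(E,F)$ if and only if $T\in\mathcal{U}_{ADD(E)}(E,F)$, and since a band in the Dedekind complete lattice $\mathcal{U}(E,F)$ is determined by its positive part, the two classes generate the same band and share the same disjoint complement.

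Finally, Lemma~\ref{satur} yields that $\pi^{ADD(E)}=\inf\{\pi^{D}:D\in ADD(E)\}$ is the band projection onto $\mathcal{U}_{ADD(E)}(E,F)^{\bot}=\mathcal{U}_{s}(E,F)^{\bot}$, while Theorem~\ref{singular} gives $\mathcal{U}_{s}(E,F)^{\bot}=\mathcal{U}_{n}(E,F)$. Therefore $\pi_{n}=\inf\{\pi^{D}:D\in ADD(E)\}$ is the order projection onto the band $\mathcal{U}_{n}(E,F)$ and $T_{n}=\pi_{n}T$, which is exactly the assertion of Theorem~\ref{projection}. The main obstacle I anticipate is the clean identification $\mathcal{U}_{ADD(E)}(E,F)=\mathcal{U}_{s}(E,F)$ for general, not necessarily positive, operators: both the defining formula for $\pi^{D}$ and the admissibility of $\mathcal{N}_{T}$ are guaranteed only on the positive cone, so the argument must be carried out on positive operators and then transferred to the whole space through the band structure, which is where care is needed.
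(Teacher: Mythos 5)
Your proposal is correct and follows exactly the route the paper intends: the paper states this corollary without proof, immediately after Lemma~\ref{satur}, precisely so that it follows by taking $\mathfrak{A}=ADD(E)$ there, identifying $\mathcal{U}_{ADD(E)}(E,F)$ with $\mathcal{U}_{s}(E,F)$, and invoking Theorem~\ref{singular} to get $\mathcal{U}_{s}(E,F)^{\bot}=\mathcal{U}_{n}(E,F)$. Your explicit verification of upward saturation and of the identification of the two operator classes (including the caveat that $\mathcal{N}_{T}$ is only known to be admissible for positive $T$, so the comparison of disjoint complements should be run through $|T|$ and the positive cone) supplies exactly the details the paper leaves implicit.
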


\begin{cor}
Let $E,F$ be a same as in Theorem~\ref{projection} . Then the following formulas valid for every $T\in\mathcal{U}_{+}(E,F)$:
$$
\pi_{n}T(e)=\inf\{\sup_{\alpha}Te_{\alpha}:\,e_\alpha \overset{\rm lat}\longrightarrow e\}, \,(e\in E).
$$

\begin{proof}
Put
$R(T,e)=\inf\{\sup_{\alpha}Te_{\alpha}:\,e_\alpha \overset{\rm lat}\longrightarrow e\}$. Given a net $(e_{\alpha})$,
$e_\alpha \overset{\rm lat}\longrightarrow e$, consider the family $ADD(e_{\alpha})$ admissible laterally dense subsets of $E$, such that $(e_{\alpha})\subset D,\,D\in ADD(e_{\alpha})$. Then we have $\sup_{\alpha}Te_{\alpha}\leq\pi^{D}Te$ for every $D\in ADD(e_{\alpha})$. Consequently,
$$
R(T,e)\leq\inf\{\pi^{D}Te:\,D\in\bigcup\limits_{e_\alpha \overset{\rm lat}\longrightarrow e} ADD(e_{\alpha})\}.
$$
On  the other  hand,  every $D$  laterally  dense admissible subset of $E$   includes  some
net $(e_{\alpha})$, $e_\alpha \overset{\rm lat}\longrightarrow e$ i.e.,  $D\in ADD(e_{\alpha})$.  Thefore
$R(T,e)\leq\pi_{n}Te$.   Hence, $0\leq R(T-\pi_{n}T,e)\leq\pi_{n}(T-\pi_{n}T)e=0$.  Moreover,  $R(\pi_{n}T,e)=\pi_{n}Te$, because $\pi_{n}Te=\sup_{\alpha}\pi_{n}Te_{\alpha}$ for
every net $(e_{\alpha})$, $e_\alpha \overset{\rm lat}\longrightarrow e$.  Since the map $R(T,e)$ is  additive
in the variable $T$,  we obtain
$$
R(T,e)=R(\pi_{n}T,e)+R(T-\pi_{n}T,e)=\pi_{n}Te
$$
\end{proof}
\end{cor}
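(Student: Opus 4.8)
The plan is to write $R(T,e):=\inf\{\sup_{\alpha}Te_{\alpha}:\,e_\alpha \overset{\rm lat}\longrightarrow e\}$ and prove $R(T,e)=\pi_{n}Te$ in two stages: first establish the one-sided bound $R(T,e)\leq\pi_{n}Te$ for every positive $T$, and then bootstrap it into an equality using the band decomposition $T=\pi_{n}T+(T-\pi_{n}T)$ together with the projection identity $\pi_{n}^{2}=\pi_{n}$. I would first check that $R(T,e)$ is well defined: the constant net $e_\alpha\equiv e$ shows the index set is nonempty, and for any laterally convergent net $e_\alpha \overset{\rm lat}\longrightarrow e$ one has $e_{\alpha}\sqsubseteq e$, so $0\leq Te_{\alpha}\leq Te$ and the supremum exists by Dedekind completeness of $F$.

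For the inequality $R(T,e)\leq\pi_{n}Te$ I would fix an admissible laterally dense set $D\in ADD(E)$ and invoke the definition of lateral density to produce a net $(e_{\alpha})\subset D$ with $e_\alpha \overset{\rm lat}\longrightarrow e$. Since each $e_{\alpha}$ is a fragment of $e$ lying in $D$, the defining formula of $\pi^{D}$ gives $\sup_{\alpha}Te_{\alpha}\leq\pi^{D}Te$, whence $R(T,e)\leq\pi^{D}Te$. As $D$ is arbitrary and $\pi_{n}=\inf\{\pi^{D}:\,D\in ADD(E)\}$ by Theorem~\ref{projection}, taking the infimum over $D$ yields $R(T,e)\leq\pi_{n}Te$.

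Next I would apply this bound to the positive operator $T-\pi_{n}T=\pi_{n}^{\bot}T\geq 0$. Because $R(S,e)\geq 0$ for every positive $S$ (each $Se_{\alpha}\geq 0$), and because the one-sided bound gives $R(T-\pi_{n}T,e)\leq\pi_{n}(T-\pi_{n}T)e=(\pi_{n}T-\pi_{n}^{2}T)e=0$, it follows that $R(T-\pi_{n}T,e)=0$. Separately, since $\pi_{n}T$ is laterally continuous and positive, for every net $e_\alpha \overset{\rm lat}\longrightarrow e$ the net $\pi_{n}Te_{\alpha}$ is increasing and order converges to $\pi_{n}Te$, so $\sup_{\alpha}\pi_{n}Te_{\alpha}=\pi_{n}Te$ independently of the chosen net.

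The final step assembles these facts. For any net $e_\alpha \overset{\rm lat}\longrightarrow e$ both $\pi_{n}Te_{\alpha}$ and $(T-\pi_{n}T)e_{\alpha}$ are increasing in $\alpha$, so the supremum of their sum equals the sum of the suprema, giving $\sup_{\alpha}Te_{\alpha}=\pi_{n}Te+\sup_{\alpha}(T-\pi_{n}T)e_{\alpha}$. The first term is constant in the net, hence pulls out of the infimum, and $R(T,e)=\pi_{n}Te+R(T-\pi_{n}T,e)=\pi_{n}Te$, as required. The main obstacle is the inequality $R(T,e)\leq\pi_{n}Te$: it is exactly here that one must translate between the net description of lateral convergence and the admissible-set formula for $\pi_{n}$ furnished by Theorem~\ref{projection}, and the step succeeds only because lateral density of $D$ guarantees a net inside $D$ converging to $e$. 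The sole other delicate point is the interchange of supremum and sum, which is legitimate precisely because both nets are monotone increasing.
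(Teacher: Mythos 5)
Your proposal is correct and follows essentially the same route as the paper: the one-sided bound $R(T,e)\leq\pi_{n}Te$ via lateral density of each $D\in ADD(E)$, then the decomposition $T=\pi_{n}T+(T-\pi_{n}T)$ with $R(T-\pi_{n}T,e)=0$ and $R(\pi_{n}T,e)=\pi_{n}Te$ from lateral continuity. The only difference is cosmetic: where the paper simply asserts that $R(\cdot,e)$ is additive in $T$, you justify it by noting that $Se_{\alpha}$ is increasing in $\alpha$ for every positive orthogonally additive $S$, so suprema split over the sum --- a worthwhile added detail.
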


\begin{rem}
The assertions \ref{projection} and \ref{singular} were proven in \cite{Pl-4} in assumptions that vector lattice $E$ has the principal projection property.
\end{rem}

\end{document}